\declaretheorem[name=Theorem,numberwithin=section]{thm}
\newtheorem*{thm*}{Theorem}
\newtheorem{prop}[thm]{Proposition}
\newtheorem{lem}[thm]{Lemma}
\theoremstyle{definition}
\newtheorem{defn}[thm]{Definition}
\newtheorem{question}[thm]{Question}
\newtheorem*{ack}{Acknowledgements}
\newtheorem{ex}[thm]{Example}
\theoremstyle{remark}
\newtheorem{rem}[thm]{Remark}
\newcommand{\QQ}{\mathbb{Q}}
\newcommand{\HH}{\mathcal{H}}
\newcommand{\PPP}{\mathcal{P}}
\newcommand{\RR}{\mathbb{R}}
\newcommand{\CC}{\mathbb{C}}
\newcommand{\X}{\mathcal{X}}
\newcommand{\ZZZ}{\mathcal{Z}}
\newcommand{\ord}{\operatorname{ord}}
\newcommand{\ZZ}{\mathbb{Z}}
\newcommand{\PP}{\mathbb{P}}
\newcommand{\Pic}{\text{Pic}}
\newcommand{\Mg}{\mathcal{M}_g}
\newcommand{\Mgn}{\mathcal{M}_{g,n}}
\newcommand{\Mgnb}{\overline{\mathcal{M}}_{g,n}}
\newcommand{\MOnb}{\overline{\mathcal{M}}_{0,n}}
\newcommand{\Hur}{\operatorname{Hur}}
\newcommand{\res}{\text{res}}
\newcommand{\SL}{\mbox{SL}}
\newcommand{\Mbar}[2]{\overline{\mathcal{M}}_{{#1}, {#2}}}
\newtheorem{Thm*}{Theorem*}
\theoremstyle{definition}
\title{Strata of differentials of the second kind, positivity and irreducibility of certain Hurwitz spaces
}
\author{Scott Mullane}
\date{\today}
\begin{document}
\thispagestyle{empty}

\maketitle

\begin{abstract}
We consider two applications of the strata of differentials of the second kind (all residues equal to zero) with fixed multiplicities of zeros and poles: \\
\\
\textbf{Positivity: }In genus $g=0$ we show any associated divisorial projection to $\overline{\mathcal{M}}_{0,n}$ is $F$-nef and hence conjectured to be nef. We compute the class for all genus when the divisorial projection forgets only simple zeros and show in these cases the genus $g=0$ projections are indeed nef. 
\\
\\
\textbf{Hurwitz spaces: } 
We show the Hurwitz spaces of degree $d$, genus $g$ covers of $\PP^1$ with pure branching (one ramified point over the branch point) at all but possibly one branch point are irreducible if there are at least $3g+d-1$ simple branch points or $d-3$ simple branch points when $g=0$.

\end{abstract}

\setcounter{tocdepth}{1}


\section{Introduction}

The moduli space of abelian differentials $\HH(\kappa)$ consists of pairs $(C,\omega)$ where $\omega$ is a holomorphic or meromorphic differential on a smooth genus $g$ curve $C$ and the multiplicity of the zeros and poles of $\omega$ is fixed of type $\kappa$, an integer partition of $2g-2$. 
 We define the \emph{stratum of zero residue abelian differentials with signature $\kappa=(k_1,...,k_m)$} as
\begin{equation*}
\HH_Z(\kappa):=\{[C,\omega]\in \HH(\kappa)   \hspace{0.15cm}| \hspace{0.15cm}\res_{p}(\omega)=0\text{ for all $p$ non-simple poles of $\omega$}\}.
\end{equation*}
Further, as our interest is in how these strata inform the birational geometry of the Deligne-Mumford compactification of the moduli space of genus $g$ curves with $n$ ordered points, $\Mgnb$, we define the \emph{stratum of zero residue canonical divisors with signature $\kappa=(k_1,...,k_m)$} as
\begin{equation*}
\ZZZ(\kappa):=\{[C,p_1,...,p_m]\in {\mathcal{M}}_{g,m}   \hspace{0.15cm}| \hspace{0.15cm}k_1p_1+...+k_mp_m\sim(\omega)\sim K_C \text{ and $\res_{p_i}(\omega)=0$ for $k_i\leq -2$}\}.
\end{equation*}
Note that the marked points of $\Mgnb$ are ordered, while $\HH(\kappa)$ records only the differential and the zeros and poles of the differential are unordered.

Integrating a holomorphic $\omega$ or \emph{differential of the first kind} by the periods of $C$ gives a cohomology class in $H^{1,0}(C)$ of the Hodge filtration of $H^1(C)$. Similarly, a meromorphic $\omega$ or \emph{differential of the third kind} gives a class in $H^1(\mathcal{U})$ where $\mathcal{U}$ is the curve $C$ punctured at the poles of $\omega$.
In the space between lie the \emph{differentials of the second kind} which are meromorphic differentials with zero residues at every pole, hence specifying a class in $H^1(C)$. The differentials of this kind that specify the zero class are exact differentials.
Differentials of the second kind arise naturally in many areas of algebraic geometry, for example, as sections of the dualising sheaf on cuspidal curves and hence from many constructions in projective geometry. 
In local charts via the Veech zippered rectangle construction (see Section~\ref{charts}) zero residue conditions are cut out by real linear equations in period coordinates. In the holomorphic case, loci with this property are known as affine invariant submanifolds and hold great dynamical importance~\cite{McMullen}\cite{EskinMirzakhani}\cite{EskinMirzakhaniMohammadi}\cite{Filip}\cite{McMW}. Further, restricting within the strata of differentials of the second kind via linear equations in period coordinates to the differentials $[C,\omega]$ such that $\int_\alpha\omega=0$ for all absolute periods $\alpha\in H_1(C)$, we obtain exact differentials, that is, differentials that arise as the result of pulling back $dz$ on the projective line under a finite cover. Hence the strata of differentials of the second kind form an intermediary between the modern study of the strata of abelian differentials and the more classical questions arising from the study of Hurwitz spaces.
 
The strata of canonical divisors and projections of these strata under morphisms forgetting marked points has provided many extremal and interesting cycles in $\Mgnb$ for $g\geq 1$~\cite{ChenCoskun}\cite{Mullane3}\cite{Mullane4}. Specifying a signature does not impose a condition on the position of points on a rational curve, though there is evidence that placing further conditions on the residues at poles provides interesting cycles in $\overline{\mathcal{M}}_{0,n}$. For example, many of the known extremal rays of the effective cone of $\overline{\mathcal{M}}_{0,n}$ arise in this way. The Keel Vermiere divisor~\cite{KV} for $n=6$, some of the Hypertree divisors~\cite{CastravetTevelevHypertree} (generalising the construction to $n\geq 7$, conjectured to give a complete description of the effective cone) and the divisors of Opie~\cite{Opie}  (which disproved the conjecture) for $n\geq 7$ can be viewed as a residue condition on a fixed signature in this way.

A major open question on the birational geometry of $\Mgnb$ is the $F$-conjecture\footnote{Named in honour of Faber and Fulton}. 
\vspace{0.4cm}
\\
\textbf{The $F$-conjecture. }The effective cone of curves in $\overline{\mathcal{M}}_{g,n}$ is generated by $F$-curves.
\vspace{0.4cm}
\\
Where an $F$-curve is defined as an irreducible component of the 1-dimensional locus of stable curves containing at least $3g+n-4$ nodes in $\Mgnb$. Dually, the conjecture can be stated as: $F$-nef divisors are nef, where a divisor is $F$-nef if it has non-negative intersection with all $F$-curves. Gibney, Keel and Morrison~\cite{GKM} showed that the higher genus case follows from the $g=0$ case where the conjecture remains open for $n\geq 8$. In the first application considered in this paper we investigate the positivity properties of the strata of zero residue differentials in general genus with a view to the $F$-conjecture in genus $g=0$.

Consider the Hurwitz space of type $\underline{\eta}$ for 
$$\underline{\eta}:=[\eta_1,\dots,\eta_s]$$
an unordered set of integer partitions of $d$, that is, the space of all degree $d$ covers with a branch profile given by $\underline{\eta}$ above the $s$ branch points,
$$   \Hur(\underline{\eta}):=\{\hspace{0.2cm}f:C\longrightarrow \PP^1\hspace{0.2cm}\big|\hspace{0.2cm}\text{The ramification of $f$ is of type $\underline{\eta}$}\}.$$                                
We further define $\eta_i$ to be a \emph{pure} partition if it contains only one entry not equal to one. If $\underline{\eta}$ has exactly one\footnote{With a little more care we actually deal with at most one non-pure branch point.} non-pure branch point, say $\eta_1$, then pulling back a differential on $\PP^1$ with a unique double pole at the branch point associated to $\eta_1$ we obtain a differential of the second kind $\omega$ on $C$ with poles at the points in the special fibre and zeros at the non-trivially ramified points away from the special fibre. Further, for any absolute cycle $\gamma\in H^1(C)$ we have $\int_\gamma\omega=0$.

In the second application considered in this paper we use this identification to produce results on the irreducibility of certain Hurwitz spaces. Though this identification relates to exact differentials, a subvariety of the strata of differentials of the second kind, we include this application as the hard earned technical results in the strategy are all inducted from the genus $g=0$ case where exact differentials and differentials of the second kind coincide.

\subsection{Compactification} 
The question of how a condition on the residues of a meromorphic differential degenerates to stable nodal curves is answered in Section~\ref{degen}. Through the use of flat geometry and the techniques of~\cite{BCGGM1} we provide the following theorem.

\begin{restatable}{thm}{ZeroResidueGRC}(\textbf{Zero Residue Global Residue Condition})\label{ZRGRC}
 A twisted canonical divisor of type $\kappa$ is the limit of twisted canonical divisors on smooth curves with zero residues at specified poles (denoted \emph{zeroed poles}) if and only if there exists a twisted differential $\{\eta_j\}$ on $C$ such that 
\begin{enumerate} 
\item
If $q$ is a node of $C$ and $q\in C_i\cap C_j$ such that $\ord_q(D_i)=\ord_q(D_j)=-1$ then $\res_q(\eta_i)+\res_q(\eta_j)=0$; 
\item
the residues in each $\eta_j$ are zero at the specified zeroed poles; and
\item
there exists a full order on the dual graph $\Gamma$, written as a level graph $\overline{\Gamma}$, agreeing with the order of $\sim$ and $\succ$, such that for any level $L$ and any connected component $Y$ of  $\overline{\Gamma}_{>L}$ that does not contain a prescribed non-zeroed pole we have
\begin{equation*}
\sum_{\begin{array}{cc}\text{level}(q)=L, \\q\in C_i\subset Y\end{array}}\res_{q}(\eta_i)=0
\end{equation*} 
\end{enumerate}
\end{restatable}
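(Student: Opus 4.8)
The plan is to establish both directions, the necessity and the sufficiency of conditions (1)--(3), by adapting the flat-geometric smoothing machinery of~\cite{BCGGM1} to the sublocus $\HH_Z(\kappa)\subset\HH(\kappa)$. The guiding observation is that in period coordinates the residue at a zeroed pole $p$ (of order $\leq -2$) is, up to a constant, the absolute period around a small loop encircling $p$, so the vanishing-residue conditions are linear and $\HH_Z(\kappa)$ is a linear subspace; the question becomes which boundary twisted differentials lie in its closure. Throughout I would read the three conditions against the Global Residue Condition of~\cite{BCGGM1}: condition (1) is the usual matching of residues at horizontal nodes, condition (2) is new and records the persistence of the vanishing residues, and condition (3) is the Global Residue Condition with the crucial strengthening that a \emph{zeroed} pole no longer counts as an exception. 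In particular (1)--(3) imply the ordinary Global Residue Condition, so smoothability in the ambient stratum will be automatic and the real content is to smooth \emph{within} $\HH_Z(\kappa)$.

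For necessity, suppose $[C,\{\eta_j\}]$ is the limit of a family $[C_t,\omega_t]$ with $\omega_t\in\HH_Z(\kappa)$. Condition (1) is inherited from~\cite{BCGGM1}, since a horizontal node is a persistent cylinder whose two residues must be opposite in any smoothing. Condition (2) follows from continuity: a zeroed pole $p$ survives in the limit as a pole of the relevant level differential $\eta_j$, and its residue, being a period of $\omega_t$, varies continuously, so $\res_p(\eta_j)=\lim_t\res_p(\omega_t)=0$. For condition (3) I would apply the residue theorem on a connected component $Y$ of $\overline{\Gamma}_{>L}$: the total residue of the level differential on $Y$ vanishes, and this total is the sum of the residues at the level-$L$ nodes of $Y$ together with the residues at the poles lying in $Y$. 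By condition (2) every zeroed pole in $Y$ contributes zero, so if $Y$ contains no non-zeroed pole the residues at the level-$L$ nodes must themselves sum to zero. This is exactly where the statement diverges from~\cite{BCGGM1}: a zeroed pole provides no reservoir into which residue can escape.

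For sufficiency, the main obstacle, I would run the plumbing construction of~\cite{BCGGM1} by induction on the number of levels, peeling off the top level and realizing the degeneration as a flat-geometric gluing with scaling parameters $t^{\text{level}}$, while insisting that the resulting forms stay in $\HH_Z(\kappa)$. At each stage one must produce correction (modification) differentials with prescribed principal parts at the level-$L$ nodes; their existence subject to the extra requirement of \emph{zero} residue at the zeroed poles is governed, component by component, by a single residue-theorem constraint. On a connected component $Y$ above level $L$, the residues at the level-$L$ nodes, at the zeroed poles (forced to vanish), and at the non-zeroed poles must sum to zero: a non-zeroed pole can absorb arbitrary residue, whereas a component with only zeroed poles forces the level-$L$ node residues to sum to zero. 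Thus condition (3), in its strengthened form, is precisely the solvability condition for the corrected differentials at each level, condition (2) guarantees that the uncorrected level differentials already have vanishing residues at the zeroed poles, and condition (1) handles the horizontal gluings.

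The hard part will be this last step: controlling the correction differentials so that the plumbing neither destroys the vanishing residues at the zeroed poles nor breaks the matching at nodes, and checking that the real-linear residue conditions are compatible with the scaling across all levels simultaneously. I expect the delicate bookkeeping to be the interplay between the full order $\succ$ on the level graph and the linear constraints, verifying that at every level the strengthened global residue condition supplies exactly the equations needed for the modification differentials to exist inside the linear subspace $\HH_Z(\kappa)$, so that the induction can close with all smoothed differentials remaining of zero-residue type.
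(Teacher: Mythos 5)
Your necessity sketch is essentially sound: conditions (1) and (2) follow from continuity of periods, and condition (3) follows by applying the residue theorem to the subsurfaces of nearby smooth fibers degenerating onto a connected component $Y$ of $\overline{\Gamma}_{>L}$, with the observation that zeroed poles inside $Y$ contribute nothing and so cannot absorb residue the way prescribed poles do in \cite{BCGGM1}. (One caveat: the residues in the sum of condition (3) live on the level-$L$ components \emph{below} $Y$, not on the differentials of $Y$ itself; identifying them with limits of boundary period integrals uses that all the relevant nodes sit at a common level and hence carry a common scaling parameter.)

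The genuine gap is in your sufficiency direction, which is the direction the theorem actually asserts. You propose to rerun the plumbing construction of \cite{BCGGM1} ``while insisting that the resulting forms stay in $\HH_Z(\kappa)$,'' and you assert that condition (3) is precisely the solvability condition for the modification differentials at each level. That assertion \emph{is} the theorem: constructing the modification differentials and proving convergence of the multi-level plumbing is the main technical content of \cite{BCGGM1}, and redoing it so that the zero-residue constraints survive every correction and every gluing, across all levels simultaneously, is exactly the step you flag as ``the hard part'' and leave open. Nothing in your outline shows the corrections can be chosen inside the zero-residue locus, so as written this is a program, not a proof. The paper sidesteps all of this with a reduction you are missing: for each zeroed pole of order $p_i$ it glues on a fixed general tail $[X_i,q_1^i,\dots,q_{p_i-1}^i]\in\mathcal{M}_{p_i-1,p_i-1}$ carrying a holomorphic differential with $(p_i-2)q_1^i+q_2^i+\cdots+q_{p_i-1}^i\sim K_{X_i}$, attaching $q_1^i$ to the pole. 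Such a tail is a leaf whose only edge descends to its attachment vertex, so it alone forms a connected component of the level graph above that vertex and contains no prescribed pole; the ordinary global residue condition of \cite{BCGGM1} applied to the enlarged curve therefore forces the residue at the attaching node to vanish. In this way ``zero residue at a marked pole'' becomes an instance of the usual GRC, membership in $\overline{\ZZZ}(\kappa)$ is translated into membership of the tail-glued curve in $\overline{\PPP}(1^{p_1-2},\dots,1^{p_m-2},k_{m+1},\dots,k_n)$, and both directions follow from the smoothing theorem of \cite{BCGGM1} together with the fact from \cite{BCGGM2} that one may smooth all nodes except the tail nodes, leaving a two-level curve whose bottom level is a smooth curve in $\ZZZ(\kappa)$. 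To salvage your route you would need to either carry out the constrained plumbing in full detail or find a comparable device converting the residue constraints into conditions the existing machinery already handles.
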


\subsection{Divisor class computation}
Let $\kappa=(k_1,...,k_r)$ with $m$ entries $k_i\leq-2$ and $\sum k_i=2g-2$. The above theorem gives a full compactification $\overline{\ZZZ}(\kappa)$ of the strata $\ZZZ(\kappa)$. In general, this compactification is not smooth. Proposition~\ref{existence} states when $\ZZZ(\kappa)$ is empty and hence Proposition~\ref{affinemanifold} shows that outside of some genus $g=0$ exceptions, $\overline{\ZZZ}(\kappa)$ forms a codimension $g+m-1$ or $g+m$ subvariety of $\Mbar{g}{r}$ for signature $\kappa $ with no simple poles or at least two simple poles respectively. Pushing forward this class under the morphism forgetting the last $r-n$ points
$$\Mbar{g}{r}\longrightarrow \Mbar{g}{n}$$
we obtain an effective divisor when $n=r-g-m+2$ or $n=r-g-m+1$ for signature $\kappa $ with no simple poles or at least two simple poles respectively. We use the notation $Z_\kappa^n$ to denote both the divisor and the class\footnote{Our class expressions are given modulo the labelling of the unmarked points. See Definition~\ref{divdefn} for an explicit description.} of the divisor in $\Pic(\Mgnb)\otimes \QQ$. In Section~\ref{thm:ZRdivI} we compute the class of this divisor in all cases where the forgotten points are simple zeros.

\vspace{0.4cm}
\begin{center}
\begin{tabular}{ |p{2.5cm}|p{9.5cm}|p{3cm}|  }
 \hline
 \multicolumn{3}{|c|}{Classes of effective divisors in $\Mgnb$ from zero residue strata of abelian differentials $\begin{array}{c}\text{   }\\ \text{   } \end{array}$} \\
 \hline
 $g$ and $n$ & Signature $\kappa$ & Reference  \\
 \hline\hline
 $g\geq 0$, $n\geq 3$& $\kappa=(d_1,\dots,d_n,1^{g+m-2})$ for $d_i\ne-1$ and $d_i\leq-2$ for $m\geq2$ entries and $\sum d_i=g-m$& Theorem~\ref{thm:ZRdivI} $\begin{array}{c}\text{   }\\ \text{   } \end{array}$\\
 \hline
 $g\geq 0$, $n\geq k+2$ &$\kappa=(d_1,\dots,d_n,1^{g+m-2})$ for $d_i=-1$ for $k\geq 2$ entries and $d_i\leq-2$ for $m\geq 1$ entries and $\sum d_i=g-m-1$  & Theorem~\ref{thm:ZRdivII} $\begin{array}{c}\text{   }\\ \text{   } \end{array}$\\
 \hline
 \hline
\end{tabular}
\end{center}
\vspace{0.4cm}

For example, the class of the divisor $[Z^{3}_{-2,-2,4,1^{2}}]$ in $\overline{\mathcal{M}}_{2,3}$ is given by the formula in Theorem~\ref{thm:ZRdivI} as
\begin{equation*}
-\lambda+\psi_1+\psi_2+10\psi_3-\delta_{1:\emptyset}-3\left(\delta_{1:\{1\}}+\delta_{1:\{2\}}+\delta_{0:\{1,2\}}+\delta_{0:\{1,2,3\}}  \right)-6\left(\delta_{0:\{2,3\}} +\delta_{0:\{1,3\}}+\delta_{1:\{3\}} \right).
\end{equation*}

In the genus $g=0$ case the classes of Theorem~\ref{thm:ZRdivI} correspond to classes of divisors introduced by Fedorchuk~\cite{Fed} which leads to a new geometric description of these divisors in terms of Hurwitz spaces that we present as Corollary~\ref{DivCor}.

\subsection{Positivity}
The positivity properties of the divisor classes are due to a number of results relating to the restriction of strata of differentials of the second kind to the boundary of the moduli space. Restricting to stable curves of compact type and strata of differentials of the second kind we obtain the following result.
\begin{restatable}{thm}{Compacttype}\label{CT}
Fix $\kappa$ such that $|\kappa|=n$ and $-1\nin\kappa$. A stable pointed curve of compact type  $[X,p_1,\dots,p_n]\in \Mgnb$ with irreducible components $\{X_i\}$ is contained in the closure $\overline{\ZZZ}(\kappa)$ if and only if there exists an associated twisted differential $(X,\{\eta_{X_i}\})$ of type $\kappa$ such that each $\eta_{X_i}$ is of the second kind.
\end{restatable}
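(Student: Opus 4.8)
The plan is to deduce both implications from the Zero Residue Global Residue Condition (Theorem~\ref{ZRGRC}), exploiting two features special to this setting. A curve of compact type has dual graph $\Gamma$ which is a tree, and since $-1\nin\kappa$ every prescribed pole is non-simple, hence \emph{zeroed}; in particular there are no prescribed non-zeroed poles, so condition (3) of Theorem~\ref{ZRGRC} applies to \emph{every} connected component $Y$ of $\overline{\Gamma}_{>L}$, while condition (2) forces $\res_p(\eta_{X_i})=0$ at every marked pole $p$ on $X_i$.

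For the ``if'' direction, suppose $(X,\{\eta_{X_i}\})$ is a twisted differential of type $\kappa$ with each $\eta_{X_i}$ of the second kind. Then every residue of every $\eta_{X_i}$, at marked poles and at nodes alike, vanishes, so no node carries a simple pole; conditions (1) and (3) of Theorem~\ref{ZRGRC} reduce to identities among sums of zeros and hold automatically, and condition (2) holds by hypothesis. Using the level function carried by the twisted differential, Theorem~\ref{ZRGRC} then places $[X,p_1,\dots,p_n]$ in $\overline{\ZZZ}(\kappa)$.

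For the ``only if'' direction, Theorem~\ref{ZRGRC} supplies a twisted differential $\{\eta_{X_i}\}$ and a compatible level function satisfying (1)--(3), and the task is to prove that every node residue vanishes, since then the differential is automatically of the second kind. Record for each edge $e$ of $\Gamma$ a residue: for a vertical edge the value $r_e=\res_q(\eta_{X_i})$ at its lower (pole) end $q$, and for a horizontal edge the value $s_e$, the two sides carrying $s_e$ and $-s_e$ by condition (1). Two families of linear relations hold: the residue theorem on each $X_i$, combined with the vanishing of the marked-pole residues, states that the residues of $\eta_{X_i}$ at its node-poles sum to zero; and the global residue condition (3), valid for all $Y$, states that for each level $L$ and each component $Y$ of $\overline{\Gamma}_{>L}$ the sum of $r_e$ over the edges crossing from $Y$ down to level $L$ is zero. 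I would then induct downward on levels. Let $L^\ast$ be the highest level occurring as the level of the lower endpoint of an edge; by maximality no edge joins two vertices both strictly above $L^\ast$, so $\overline{\Gamma}_{>L^\ast}$ is a disjoint union of isolated vertices. Contracting each component of $\overline{\Gamma}_{>L^\ast}$ and retaining the level-$L^\ast$ vertices yields a minor of the tree $\Gamma$, hence a forest, whose edges are exactly the vertical edges entering level $L^\ast$ and the horizontal edges at level $L^\ast$. The global residue condition at each contracted top vertex and the residue theorem at each level-$L^\ast$ vertex assert precisely that the incident residues sum to zero at every vertex of this forest (a horizontal edge contributing opposite values at its two ends); on a forest this forces every edge value to vanish, by repeatedly deleting a leaf whose single incident value must be zero. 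Assuming inductively that every residue at an edge with lower endpoint strictly above level $L$ vanishes, the analogous forest at level $L$ again has vanishing incident sums at every vertex, so its residues vanish too. Descending through all levels, every node residue, and in particular every $s_e$, is zero; thus no node is a simple pole and each $\eta_{X_i}$ is of the second kind.

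The main obstacle is exactly this combinatorial core: neither the residue theorem on one component nor a single instance of the global residue condition isolates an individual node residue, and only their interplay, organised level by level along the tree, does so. Its two supporting tree facts — that contracting the connected pieces lying above a level leaves a forest, and that a forest carrying vanishing incident sums at every vertex is identically zero — must be combined with an induction arranged so that residues already known to vanish drop out of the lower constraints; the horizontal edges, which would otherwise obstruct the second-kind conclusion, are absorbed into the same forests and shown to carry zero residue, confirming that the level structure may be taken purely vertical.
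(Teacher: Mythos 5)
Your proposal is correct and follows essentially the same route as the paper's proof: both directions rest on Theorem~\ref{ZRGRC}, with the ``only if'' direction combining the global residue condition (applicable to \emph{every} upper component since $-1\nin\kappa$ leaves no non-zeroed poles), the residue theorem on each component (whose marked-pole residues vanish), and the tree structure of a compact-type dual graph to force every node residue to zero, with horizontal edges ruled out because a simple pole cannot have vanishing residue. The only difference is bookkeeping: the paper peels off degree-one vertices of the tree one at a time (GRC when the leaf lies above its node, residue theorem when it lies below), whereas you process whole levels at once via contracted forests and leaf deletion --- the same combinatorial mechanism organized differently.
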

Restricting to the genus $g=0$ case where all curves are of compact type, we obtain the following result regarding restriction to boundary strata.
\begin{restatable}{lem}{Restrictionlemma}\label{rest}
For $\overline{\ZZZ}(\kappa)$ of the second kind in $g=0$ and $B$ a boundary stratum of any dimension,
\begin{equation*}
\text{codim}_B(\overline{\ZZZ}(\kappa)\cap B)=\text{poles}(\kappa)-1
\end{equation*}
or the intersection is empty.
\end{restatable}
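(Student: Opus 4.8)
The plan is to reduce the statement to a count carried out component by component on the stable curve, exploiting that in genus $0$ every stable curve is of compact type, so that Theorem~\ref{CT} applies verbatim. Fix a boundary stratum $B$, corresponding to a stable dual tree with vertex set $V$; for the purpose of the dimension count identify $B$ (up to a finite group) with the product $\prod_{v\in V}\mathcal{M}_{0,n_v}$, where $n_v$ is the number of special points (nodes and marked legs) lying on the component $X_v$. By Theorem~\ref{CT} a point of $B$ lies in $\overline{\ZZZ}(\kappa)$ precisely when there is a choice of node orders making the induced signature $\kappa_v$ on each $X_v$ carry a differential of the second kind; hence, as sets, $\overline{\ZZZ}(\kappa)\cap B=\bigcup_{\mathbf a}\prod_{v}\ZZZ(\kappa_v(\mathbf a))$, a finite union over the admissible order assignments $\mathbf a$ at the nodes.

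First I would record the key node count. At a node $q=X_i\cap X_j$ a twisted differential of compact type satisfies $\ord_q(\eta_i)+\ord_q(\eta_j)=-2$, and since each $\eta_{X_v}$ is of the second kind no order equals $-1$; therefore exactly one branch carries a pole (order $\le -2$) and the other a zero or regular point (order $\ge 0$). Thus each of the $|V|-1$ nodes contributes exactly one pole, and writing $p_v$ for the number of poles of $\eta_{X_v}$ we get
\begin{equation*}
\sum_{v\in V}p_v=\text{poles}(\kappa)+(|V|-1),\qquad\text{so}\qquad \sum_{v\in V}(p_v-1)=\text{poles}(\kappa)-1 .
\end{equation*}
Since a nonzero differential on $\PP^1$ must have a pole, every component satisfies $p_v\ge 1$, so each summand $p_v-1$ is nonnegative.

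Next I would establish the per-component codimension: $\ZZZ(\kappa_v)\subset\mathcal{M}_{0,n_v}$ is empty or of codimension exactly $p_v-1$. On $\PP^1$ the linear-equivalence condition is automatic once the orders sum to $-2$, so $\ZZZ(\kappa_v)$ is cut out purely by the vanishing of the $p_v$ residues of the differential with signature $\kappa_v$, which is unique up to scale. The residue theorem supplies the single relation $\sum_q\res_q(\eta_{X_v})=0$, giving codimension at most $p_v-1$ for free. For the reverse inequality I would pass to the period coordinates of the stratum (the zippered-rectangle charts of Section~\ref{charts}), in which each residue appears as a coordinate and the only linear relation among the residues is the residue theorem; the vanishing of all residues then imposes $p_v-1$ genuinely independent conditions, so the codimension is exactly $p_v-1$. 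As $B$ is a product and the conditions on distinct components involve disjoint coordinates, the codimension of $\prod_v\ZZZ(\kappa_v(\mathbf a))$ in $B$ is $\sum_v(p_v-1)=\text{poles}(\kappa)-1$ for every admissible $\mathbf a$.

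Finally, taking the finite union over $\mathbf a$ preserves this codimension: either every piece is empty, in which case the intersection is empty, or some piece is nonempty and has codimension $\text{poles}(\kappa)-1$, whence $\overline{\ZZZ}(\kappa)\cap B$ has codimension exactly $\text{poles}(\kappa)-1$. The main obstacle is the reverse (independence) inequality in the per-component count, namely ruling out dependencies among the residues beyond the residue theorem; I expect the linearity of the residue conditions in period coordinates to be the cleanest route, but some care is needed to see that the count is uniform across all admissible order assignments $\mathbf a$ and to treat components whose node orders are forced (for instance a leaf component with no polar legs, where the adjacent node must be the polar branch lest $\eta_{X_v}$ vanish identically).
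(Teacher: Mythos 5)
Your proposal is correct and follows essentially the same route as the paper: apply Theorem~\ref{CT} to decompose the intersection componentwise, note that each node contributes exactly one extra pole (since second-kind differentials admit no simple poles), and conclude via the per-component codimension count poles$(\kappa_v)-1$, which the paper has already established in Proposition~\ref{affinemanifold} by the same period-coordinate/connectedness argument you sketch. The paper's proof is merely a terser version of yours, citing that proposition rather than re-deriving the independence of the residue conditions.
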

Considering a divisor $Z_\kappa^n$ of the second kind in $\Pic_\QQ(\overline{\mathcal{M}}_{0,n})$ we apply this lemma to the intersection of the preimage of any $F$-curve under the forgetful morphism and $\overline{\ZZZ}(\kappa)$ to show that $Z_\kappa^n$ cannot contain any $F$-curve and hence necessarily has non-negative intersection will all such curves.
\begin{restatable}{thm}{Fnef}\label{Fnef}
Every irreducible component of the divisor $Z_\kappa^n$ of the second kind in $\Pic_\QQ(\overline{\mathcal{M}}_{0,n})$ is $F$-nef.
\end{restatable}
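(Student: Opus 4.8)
The plan is to build on the elementary positivity principle that an effective divisor meets non-negatively any irreducible curve not contained in its support. Concretely, if $D$ is an irreducible component of $Z_\kappa^n$ and $F$ is an $F$-curve with $F\not\subset D$, then $D\cap F$ is a proper closed subset of the irreducible curve $F$, hence a finite set of points, and $D\cdot F\geq 0$. It therefore suffices to prove that no $F$-curve lies inside any component of $Z_\kappa^n$; equivalently, that $F\not\subset Z_\kappa^n$ for every $F$-curve $F$. Since $\MOnb$ is a smooth projective variety this reduction is clean, and it transfers the whole problem to a dimension estimate for $\overline{\ZZZ}(\kappa)$.

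Write $\pi\colon\overline{\mathcal{M}}_{0,r}\to\MOnb$ for the forgetful morphism realising $Z_\kappa^n=\pi_*\overline{\ZZZ}(\kappa)$, where $r=n+\text{poles}(\kappa)-2$ in the second-kind genus $0$ case, so that $\pi$ forgets $r-n=\text{poles}(\kappa)-2$ marked points. An $F$-curve is the closure $F=\overline{B^{\circ}}$ of the open part of a one-dimensional boundary stratum $B\subset\MOnb$, whose generic member is a central $\PP^1$ carrying the four special points attached to the four rigid tails of the defining partition. First I would analyse $\pi^{-1}(B)$: its irreducible components are boundary strata $B'$ of $\overline{\mathcal{M}}_{0,r}$ obtained by distributing the $r-n$ forgotten points over the generic curve of $B$, and the components dominating $B$ have dimension $\dim B+(r-n)=1+(\text{poles}(\kappa)-2)=\text{poles}(\kappa)-1$ (bubbling off the forgotten points only produces smaller strata, so the bound is attained exactly on the dominating components). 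Applying the Restriction Lemma (\Cref{rest}) to each such $B'$ gives that $\overline{\ZZZ}(\kappa)\cap B'$ is either empty or of codimension $\text{poles}(\kappa)-1$ in $B'$, hence of dimension $(\text{poles}(\kappa)-1)-(\text{poles}(\kappa)-1)=0$. Summing over the finitely many components, $\overline{\ZZZ}(\kappa)\cap\pi^{-1}(B^{\circ})$ is zero-dimensional, so its image $Z_\kappa^n\cap B^{\circ}=\pi\big(\overline{\ZZZ}(\kappa)\cap\pi^{-1}(B^{\circ})\big)$ is a proper, finite subset of the one-dimensional $B^{\circ}$. Thus the generic point of $F$ avoids $Z_\kappa^n$, giving $F\not\subset Z_\kappa^n$, and by the first paragraph $D\cdot F\geq 0$ for every component $D$ and every $F$-curve $F$.

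Granting the Restriction Lemma, the main obstacle is that this count is exactly tight: $\pi^{-1}(B)$ has dimension precisely $\text{poles}(\kappa)-1$ along each dominating stratum, and the lemma removes exactly $\text{poles}(\kappa)-1$ dimensions, so the intersection is zero-dimensional and just barely fails to dominate the one-dimensional $F$. The argument consequently hinges on two points needing care. The first is that the Restriction Lemma must deliver the full codimension $\text{poles}(\kappa)-1$ uniformly on every relevant $B'$, including the high-dimensional strata in which the forgotten points have migrated onto the moving central component; any drop in codimension on such a stratum would allow the intersection to become one-dimensional and possibly dominate $F$. The second is the verification that every component of $\pi^{-1}(B)$ dominating $B$ has the asserted dimension $\text{poles}(\kappa)-1$ — that is, that no degeneration of the forgotten points enlarges a dominating stratum — and that $\text{poles}(\kappa)$ remains the correct constant under these boundary degenerations, so that the codimension supplied by the lemma is the same $\text{poles}(\kappa)-1$ throughout.
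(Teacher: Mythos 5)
Your proposal is correct and follows essentially the same route as the paper: pull the $F$-curve back under the forgetful map, note that each component of the preimage is a boundary stratum of dimension $\text{poles}(\kappa)-1$, apply the Restriction Lemma (\Cref{rest}) to get a zero-dimensional intersection with $\overline{\ZZZ}(\kappa)$, and conclude the $F$-curve is not contained in $Z_\kappa^n$, hence meets it non-negatively. The two points you flag as needing care are exactly what the paper's argument relies on, and both hold: the lemma applies uniformly to boundary strata of any dimension, and flatness of the forgetful morphism forces every component of the preimage to have dimension exactly $\text{poles}(\kappa)-1$ (so in particular each one dominates the $F$-curve).
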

Though ampleness is due to an abundance of sections, namely, enough sections to separate points and tangent vectors, the above theorem is the result of our explicit knowledge of the zero locus of just one section of the line bundles associated with each $Z_\kappa^n$.
In the cases considered in Theorem~\ref{thm:ZRdivI}, as discussed, our classes correspond to divisor classes considered by Fedorchuk~\cite{Fed} and shown to be nef. However, we include a proof of this fact from our perspective as it is instructive of a possible strategy to prove the remaining open cases\footnote{The strategy is well-known and applies to any subset of $F$-nef divisors that are boundary effective and closed under boundary restriction For a time it was considered a possible strategy to prove the $F$-conjecture until Pixton~\cite{Pixton} produced an example in $n=12$ of a nef divisor that is not boundary effective.}. Setting $p=$poles$(\kappa)$ we obtain the following.
\begin{restatable}{thm}{Nef}\label{Nef}
Every divisor $Z_\kappa^n$ of the second kind in $\Pic_\QQ(\overline{\mathcal{M}}_{0,n})$ for $\kappa=(d_1,\dots,d_n,1^{p-2})$ is nef.
\end{restatable}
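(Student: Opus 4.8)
The plan is to argue by induction on $n$, taking as the engine the well-known principle that an effective, $F$-nef divisor that is closed under boundary restriction is nef once the analogous statement is known on all lower boundary strata. The base cases are the small values of $n$ (for instance $n\le 7$) for which the $F$-conjecture is classical, so that $F$-nefness already implies nefness and Theorem~\ref{Fnef} suffices.

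For the inductive step set $D=Z_\kappa^n$. Since $D$ is the pushforward of $\overline{\ZZZ}(\kappa)$ and is cut out by a single section of its associated line bundle, it is effective, so $D\cdot C\ge 0$ automatically for every irreducible curve $C\not\subseteq\operatorname{Supp}(D)$, and the $F$-curves are covered directly by Theorem~\ref{Fnef}. It therefore remains to control curves $C\subseteq\operatorname{Supp}(D)$. First I would establish closure under boundary restriction: on a boundary divisor $\delta_{0:S}\cong\MOnb[|S|+1]\times\MOnb[|S^c|+1]$ I would use the Zero Residue Global Residue Condition (Theorem~\ref{ZRGRC}) to determine which twisted differentials of type $\kappa$ degenerate into $\delta_{0:S}$, and then read off from Theorem~\ref{thm:ZRdivI} that $D|_{\delta_{0:S}}$ is a nonnegative combination of nef classes pulled back from the two factors, the essential contributions being second-kind divisors $Z_{\kappa'}^{n'}$ on the factors; since each factor carries fewer marked points, these are nef by the inductive hypothesis, and hence so is $D|_{\delta_{0:S}}$.

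With nefness of $D$ on the boundary in hand, the remaining task is the curves $C\subseteq\operatorname{Supp}(D)$ that meet the interior $\mathcal{M}_{0,n}$, for which effectivity gives nothing. Here Lemma~\ref{rest} is the essential tool: the codimension count it provides pins down the geometry of $\overline{\ZZZ}(\kappa)$ along every boundary stratum and, through the affine period-coordinate description of the interior stratum, is what one leverages to degenerate such a $C$ into $\partial\,\MOnb$ without changing its intersection with $D$, reducing to the boundary case already treated. Assembling the three cases—$C\not\subseteq\operatorname{Supp}(D)$ handled by effectivity, $C$ an $F$-curve handled by Theorem~\ref{Fnef}, and $C$ reduced to the boundary and handled by induction—yields $D\cdot C\ge 0$ for all $C$, completing the induction.

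I expect the main obstacle to be twofold. The restriction computation demands careful GRC bookkeeping to guarantee that the coefficients of the same-type divisors $Z_{\kappa'}^{n'}$ appearing in $D|_{\delta_{0:S}}$ are genuinely nonnegative, rather than merely that the restriction lies in the span of these classes. More seriously, the reduction of interior-meeting curves in $\operatorname{Supp}(D)$ to the boundary is the true \emph{crux}: since effectivity is silent for such curves, one must exploit the fine structure of the stratum encoded in Lemma~\ref{rest} to degenerate or exclude them, and it is precisely this step that makes the argument instructive for the remaining open cases.
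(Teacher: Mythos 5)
Your skeleton matches the paper's — induction on $n$ with base case $n\le 7$ (where the $F$-conjecture is known, so Theorem~\ref{Fnef} gives nefness), plus the boundary restriction formula expressing $Z_\kappa^n\big|_{\delta_{0:S}}$ as a sum of pullbacks of second-kind divisors on the two factors, handled by the inductive hypothesis. But your treatment of the crux — irreducible curves meeting the interior $\mathcal{M}_{0,n}$ — has a genuine gap. Your stated engine, ``effective $+$ $F$-nef $+$ closed under boundary restriction $\Rightarrow$ nef,'' is not the correct principle: plain effectivity says nothing about curves lying inside $\operatorname{Supp}(Z_\kappa^n)$, which is exactly where you get stuck. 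The principle the paper invokes (see its footnote) requires \emph{boundary} effectivity, and your proposed substitute — using Lemma~\ref{rest} to ``degenerate such a $C$ into the boundary without changing its intersection with $D$'' — does not work. Lemma~\ref{rest} is a codimension count for intersections $\overline{\ZZZ}(\kappa)\cap B$ with boundary strata; in the paper it is used only to prove Theorem~\ref{Fnef}, and it provides no mechanism for specializing an arbitrary effective curve into $\partial\overline{\mathcal{M}}_{0,n}$ while preserving intersection numbers. Indeed, the assertion that every effective curve class can be so reduced to boundary (ultimately $F$-curve) classes is essentially the $F$-conjecture itself, so this step is circular as proposed.

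The paper's resolution is much simpler and is already available to you: the last assertion of Theorem~\ref{thm:ZRdivI} is that in genus $0$ the class $Z_\kappa^n$ is \emph{boundary effective}, i.e.\ equal in $\Pic_\QQ(\overline{\mathcal{M}}_{0,n})$ to a non-negative combination of the classes $\delta_{0:S}$. Hence any irreducible curve $B$ meeting the interior is contained in no boundary divisor, so $B\cdot\delta_{0:S}\ge 0$ for every $S$, and therefore $B\cdot Z_\kappa^n\ge 0$ — whether or not $B\subseteq\operatorname{Supp}(Z_\kappa^n)$. Together with the boundary case (restriction formula plus induction), this covers all curves; note also that once the induction is set up this way, Theorem~\ref{Fnef} is needed only for the base case $n\le 7$, not as a separate case in the inductive step. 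Replace your degeneration argument with this appeal to boundary effectivity and your proof closes.
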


The irreducible components of the divisors $Z_\kappa^n$ covered by Theorem~\ref{Nef} are further known to be $F$-nef by Theorem~\ref{Fnef}. The quesion of classifying the irreducible components of $Z_\kappa^n$ in genus $g=0$ is the classical question of classifying the connected components of certain Hurwitz spaces.
\subsection{Irreducibility of certain Hurwitz spaces}
The question of the irreducibility of Hurwitz spaces can be viewed as a purely group theoretic question of the orbits of the conjugacy classes of the monodromy representations for covers $f:C\longrightarrow \PP^1$ under the action of the relevant subgroup of the braid group. From this perspective Clebsch~\cite{Clebsch} considered degree $d$ covers of $\PP^1$ where $\underline{\eta}$ contained only simple branching and showed using purely group theoretic methods that all conjugacy classes of monodromy representations of this type are equivalent under the action of the braid group that moves the branch points. Hence these spaces are irreducible. More recently,  Graber, Harris and Starr used similar methods to extend this result to degree $d$ simply ramified covers of genus $h\geq 1$ target curves. 

Kluitmann~\cite{Kluitmann} and Natanzon~\cite{Natanzon} used similar methods to show that the Hurwitz spaces with arbitrary genus source curve, genus $g=0$ target curve and simple branching at all but one branch point are connected. Liu and Osserman~\cite{LO} considered only the cases where the source curve has genus $g=0$ and used limit linear series to show the irreducibility of Hurwitz spaces with genus $g=0$ source and target curves and pure ramification. We obtain the following result.
\begin{restatable}{thm}{Hurwitz}\label{Hurwitz}
$\Hur(\underline{\eta})$ is irreducible if $\underline{\eta}=[\eta_1,\dots,\eta_s]$ specifying the cycle type of branching contains at most one $\eta_i$ that is not pure and at least $3g+(d-1)$ transpositions.
\end{restatable}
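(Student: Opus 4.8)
The plan is to translate the statement into the classical group-theoretic form and then induct on the genus $g$, spending the simple transpositions to peel off one handle at a time. After fixing the $s$ branch points, a point of $\Hur(\underline{\eta})$ is recorded, up to simultaneous conjugation, by a tuple $(\sigma_1,\dots,\sigma_s)$ in $S_d$ with each $\sigma_i$ of cycle type $\eta_i$, satisfying $\prod_i\sigma_i=1$ and generating a transitive subgroup; irreducible components of $\Hur(\underline{\eta})$ correspond to orbits of the spherical Hurwitz (braid) group acting by the moves $(\dots,\sigma_i,\sigma_{i+1},\dots)\mapsto(\dots,\sigma_i\sigma_{i+1}\sigma_i^{-1},\sigma_i,\dots)$. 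Each such move preserves the multiset of cycle types, the product, and the generated subgroup; since simple transpositions are present, one first checks that this generated subgroup is the full symmetric group $S_d$ in every case under consideration, so that the sole remaining task is to prove that all admissible tuples lie in one Hurwitz orbit.

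The base of the induction is $g=0$. There the fully pure case is exactly the theorem of Liu--Osserman, and I would extend it to permit the single non-pure branch point $\eta_1$ using the genus-$0$ analysis that underlies the rest of the paper --- equivalently, the Hurwitz-space description of Fedorchuk's divisors recorded in Corollary~\ref{DivCor}, which is precisely the study of monodromy tuples consisting of one distinguished cycle together with a collection of transpositions. In genus $0$ the hypothesis reads as $d-1$ (indeed $d-3$) simple transpositions, and this is the number required to braid the transpositions into Clebsch normal form while simultaneously standardising the pure cycles and the lone non-pure cycle.

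For the inductive step I would assume the result in all genera below $g$. Given a genus-$g$ tuple, I would first invoke the classical fact quoted above --- transitivity of the braid action on a tuple of transpositions generating a transitive subgroup --- to braid a prescribed set of simple transpositions into a fixed ``handle'' configuration, and then carry out the degeneration that contracts this handle. At the level of tuples this pinches off a pair of simple transpositions, lowering the genus by one while keeping the degree $d$ fixed, and leaves a tuple still having at most one non-pure branch point. The reduction consumes a bounded number of simple transpositions per handle, and the budget $3g+(d-1)$ is calibrated so that after $g$ such reductions at least $d-1$ simple transpositions survive to feed the base case. By induction the reduced tuples form a single Hurwitz orbit; provided the reduction is compatible with the Hurwitz moves and every genus-$g$ tuple maps to the same reduced orbit, the genus-$g$ tuples are forced into one orbit as well.

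The main obstacle is this pinching step in the presence of the non-pure branch point. Hurwitz moves slide the non-pure cycle $\eta_1$ freely among the transpositions, but I must guarantee two things: first, that the handle-forming transpositions can always be chosen so that contracting them does not break $\eta_1$ into two non-pure points, which would violate the inductive hypothesis; and second, that the induced map on orbits is surjective and identifies all genus-$g$ preimages of a fixed lower-genus orbit, so that transitivity genuinely lifts. Both are established by tracking the local monodromy through the degeneration and applying the genus-$0$ base case to the bubbled-off rational component --- this is the technical heart that, as advertised, is inducted from genus $0$. Once these points are in place, transitivity of the Hurwitz action, and hence irreducibility of $\Hur(\underline{\eta})$, follows by induction on $g$.
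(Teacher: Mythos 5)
Your proposal takes a genuinely different route from the paper --- you induct on the genus $g$ via braid moves and handle-pinching, whereas the paper inducts on the degree $d$ with Clebsch's theorem (the case $d=g+1$, where all branching is simple) as base case, working throughout with strata of exact differentials, dominance of forgetful maps (Lemmas~\ref{dom} and~\ref{onecomp}), and non-proportionality of divisor classes (Lemma~\ref{nonproport}) rather than with the braid action. However, your argument has a genuine gap, and it sits exactly where the difficulty of the theorem lies: your base case is not established. The genus-$0$ statement with one non-pure branch point, arbitrary pure branch points and $d-3$ simple transpositions is not a routine extension of Liu--Osserman; it is precisely Theorem~\ref{Hurwitzgenuszero}, a separate theorem whose proof in the paper requires the full inductive machinery (induction on degree, the dominance lemmas, the non-proportionality argument, and an additional moving-curve computation to handle the $p_i=2$ subtlety). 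Corollary~\ref{DivCor}, which you invoke in its place, is only a divisor class formula: it records the class of the closure of a Hurwitz locus in $\Pic_\QQ(\overline{\mathcal{M}}_{0,n})$ and says nothing about how many irreducible components that locus has. So your induction is anchored on an unproven statement essentially as hard as the theorem itself.

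The inductive step is also asserted rather than proved. The classical handle-pinching argument (Kluitmann, Natanzon) works when all but one branch point is simple, because the braid group then has enough freedom to normalise the transpositions without interference; with many non-simple pure branch points present, the two claims you yourself flag as ``the main obstacle'' --- that the handle-forming transpositions can be braided into position without splitting the non-pure cycle, and that transitivity on the reduced (genus $g-1$) orbits lifts to transitivity on the genus-$g$ orbits --- are exactly what must be proved, and ``tracking the local monodromy through the degeneration'' is not an argument. The lifting claim is the crux: distinct genus-$g$ orbits could a priori map to the same lower-genus orbit, and nothing in your sketch rules this out. Note that the paper sidesteps orbit-lifting entirely: it shows a hypothetical second component of $\X(\kappa)/S_{3g+s-1}$ would have positive-dimensional fibres under the forgetful map, forces the image of such a fibre into the intersection of the $s$ divisors $\phi'_*\overline{\X}(\kappa^i)$, and uses non-proportionality of their classes to make that intersection codimension at least two, yielding a dimension contradiction. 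If you want to salvage your genus induction, you would need to first prove Theorem~\ref{Hurwitzgenuszero} independently and then supply an actual proof of the orbit-lifting step; at that point your approach would be a legitimate alternative, but neither ingredient is currently in place.
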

Our method relies on flat geometry and as such, we define the \emph{strata of exact abelian differentials with signature $\kappa=(k_1,...,k_n)$} as
\begin{equation*}
\HH_X(\kappa):=\{[C,\omega]\in \HH(\kappa)   \hspace{0.15cm}| \hspace{0.15cm}\int_\gamma\omega=0 \text{ for any closed path $\gamma$ in $C\setminus\{\text{poles of $\omega$}\}$}\},
\end{equation*}
and the \emph{stratum of exact canonical divisors with signature $\kappa=(k_1,...,k_n)$} as
\begin{equation*}
\X(\kappa):=\{[C,q_1,\dots,q_n]\in {\mathcal{M}}_{g,n}   \hspace{0.15cm}| \hspace{0.15cm}k_1q_1+...+k_nq_n\sim(\omega)\sim K_C \text{ and $[C,\omega]\in \HH_X(\kappa)$}\}.
\end{equation*}
Observe that $\X(\kappa)$ provides a finite cover of $\PP\HH_X(\kappa)$ due to the labelling of the zeros and poles. Further, if 
$$\kappa=(-p_1,\dots,-p_s,a_1,\dots,a_m)$$
for $p_i\geq 2$ and $a_i\geq 1$ and define
$$\underline{\eta}_\kappa=[(p_1-1)\cdots(p_s-1),(a_1+1),\dots,(a_m+1)]$$
where the entry $\eta_i$ of $\eta_\kappa$ specifies the cycle type of branching above a branch point. Then there exists a dominant morphism
$$\X(\kappa)\longrightarrow \Hur(\underline{\eta}_\kappa)$$
obtained by integrating the differential $\omega$ to obtain the cover of the projective line. This morphism factors through the action of the group permuting the markings appropriately for any $a_i=a_j$ and any $p_i=p_j$.

To prove Theorem~\ref{Hurwitz} we proceed by induction on $\underline{\eta}_\kappa$ via induction on the degree of $\Hur(\underline{\eta}_\kappa)$. The base case for the induction is provided by Clebsch~\cite{Clebsch} when $d=g+1$ and necessarily all ramification is simple. Let $\kappa=(-p_1,\dots,-p_s,a_1,\dots,a_n, 1^{3g+s-1})$ with $p_i\geq 2$ and $a_i\geq 1$. The degree of the covers obtained in $ \Hur(\underline{\eta}_\kappa)$ is given by
$$d=\sum_{i=1}^s(p_i-1).$$ 
Consider 
$$\phi:\X(\kappa)/S_{3g+s-1}\longrightarrow \mathcal{M}_{g,s+n}$$
that forgets all but the first $s+n$ points where $S_{3g+s-1}$ permutes the markings on the final $3g+s-1$ points. Lemma~\ref{dom} shows this morphism is dominant while Lemma~\ref{onecomp} shows that exactly one connected component of $\X(\kappa)/S_{3g+s-1}$ is dominant under $\phi$. Hence if $\X(\kappa)/S_{3g+s-1}$ is not irreducible there must be a connected component with positive dimensional fibres under $\phi$.  

Consider a point $[C,p_1,\dots,p_{s+n}]\in \mathcal{M}_{g,s+n}$ with one dimensional fibre under $\phi$. The forgotten points move on $C$ and hence as $C$ is one dimensional and irreducible, in the closure $\overline{\X}(\kappa)/S_{3g+s-1}$ for some simple zero must collide with each $p_1,\dots,p_{s}$, the marked poles.  By the degeneration of differentials of the second kind, this places $[C,p_1,\dots,p_{s+n}]$ inside the intersection of the image of $s$ different strata of lower degree assumed to be irreducible by the induction hypothesis. Lemma~\ref{nonproport} shows this intersection is of codimension at least two, necessitating fibres of dimension at least two contradicting the irreducibility of these lower degree strata and providing the inductive step.

The following is and example of an application of Theorem~\ref{Hurwitz}. 

\begin{restatable}{ex}{exgenusone}\label{exgenusone}
Consider the monodromy group
$$M=\{(12)(34),(456),(12),(13),(13),(14),(14),(35),(45),(56)\}$$
that gives a connected degree $d=5$ cover of the projective line by a genus $g=1$ curve. 
Theorem~\ref{Hurwitz} implies the associated Hurwitz space is irreducible.
\end{restatable}

Replacing the base case with the result of Liu and Osserman~\cite{LO} we obtain a sharpening of this result in the genus $g=0$ case.
\begin{restatable}{thm}{Hurwitzgenuszero}\label{Hurwitzgenuszero}
$\Hur(\underline{\eta})$ with source curve of genus $g=0$ is irreducible if $\underline{\eta}=(\eta_1,\dots,\eta_n)$ specifying the cycle type of branching contains at most one $\eta_i$ that is not pure and at least $d-3$ transpositions.
\end{restatable}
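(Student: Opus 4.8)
The plan is to reprove Theorem~\ref{Hurwitz} essentially verbatim, replacing only the base case of the induction. The entire inductive machinery -- Lemma~\ref{dom} (dominance of $\phi$), Lemma~\ref{onecomp} (exactly one dominant component), and Lemma~\ref{nonproport} (the codimension-two intersection estimate) -- is, as the excerpt stresses, inducted up from the genus $g=0$ situation and makes no use of the specific genus in its geometric content. So the first thing I would do is set $g=0$ throughout and check that the induction structure survives unchanged: the relevant signature becomes $\kappa=(-p_1,\dots,-p_s,a_1,\dots,a_n,1^{s-1})$ (specializing $3g+s-1$ to $s-1$), and the degree of the covers is still $d=\sum_{i=1}^s(p_i-1)$. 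The inductive step -- a positive-dimensional fibre of $\phi$ forces forgotten simple zeros to collide with each marked pole, landing $[C,p_1,\dots,p_{s+n}]$ in the intersection of $s$ images of strictly lower-degree strata, which Lemma~\ref{nonproport} shows has codimension at least two and hence yields a contradiction -- goes through identically since all three lemmas apply in arbitrary genus and in particular in genus zero.

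The substantive change is the base case. In Theorem~\ref{Hurwitz} the induction is seeded by Clebsch's theorem at $d=g+1$, where every branch point is necessarily simple and one needs at least $3g+d-1$ simple transpositions. Here I would instead seed the induction with the result of Liu and Osserman~\cite{LO}, which establishes irreducibility of $\Hur(\underline{\eta})$ for genus-zero source and target curves with pure ramification, using limit linear series. Replacing the base case with \cite{LO} rather than Clebsch is precisely what allows the simple-branch-point hypothesis to be relaxed from $3g+(d-1)=d-1$ down to $d-3$ in the genus-zero specialization: the induction reduces degree by peeling off branching, and the number of simple branch points required is driven by how far down the induction must run before reaching a base case already known to be irreducible. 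Since \cite{LO} already handles the purely-branched genus-zero covers outright, fewer simple transpositions are needed to descend into its range, sharpening $d-1$ to $d-3$.

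Concretely, I would verify that with the Liu--Osserman base case in place the induction terminates correctly: starting from $\underline{\eta}$ with at most one non-pure $\eta_i$ and at least $d-3$ simple transpositions, each inductive step of Lemma~\ref{nonproport} removes simple branching (lowering the degree of the cover), and after finitely many steps one arrives either at a pure-ramification configuration covered directly by \cite{LO} or at a configuration with so little remaining branching that irreducibility is immediate. The bookkeeping here is the analogue of the count $d=\sum_{i=1}^s(p_i-1)$ in the genus-zero setting, and I would confirm that the threshold $d-3$ (rather than $d-1$) is exactly the point at which the descent reaches \cite{LO}.

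\textbf{The main obstacle} I anticipate is not in the geometry -- which is inherited wholesale from Theorem~\ref{Hurwitz} -- but in the combinatorial accounting at the base case: I must ensure that the strictly-lower-degree strata produced in the inductive step, when specialized to genus zero and pushed down by collisions of simple zeros with poles, actually land inside the irreducibility range guaranteed by \cite{LO}, and that the improved bound $d-3$ is tight with respect to where the two base cases (Clebsch versus Liu--Osserman) diverge. In particular I would need to check carefully that no configuration falls into a gap between "still has a simple transposition to induct on" and "is already pure enough for \cite{LO}," since such a gap is exactly what the $3g+(d-1)$ bound was buying in the general case and what the sharper $d-3$ bound must continue to cover in genus zero.
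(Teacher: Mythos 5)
Your skeleton is the paper's own: the same induction on the degree $d$, the same three lemmas (\ref{dom}, \ref{onecomp}, \ref{nonproport}), with the base case replaced by Liu--Osserman at $d=3$ (together with the observation that $s=d-1$ is the pure case already covered by \cite{LO}, so one may assume $s\le d-2$). But the proposal's central claim --- that the inductive step of Theorem~\ref{Hurwitz} ``goes through identically'' in genus zero --- is precisely what fails, and the failure is the gap you flagged in your final paragraph without closing. Track the numerics: starting from $\kappa=(-p_1,\dots,-p_s,a_1,\dots,a_m,1^{d-3})$, when a forgotten simple zero collides with a pole of order $p_i\ge 3$ the resulting signature $\kappa^i$ has degree $d-1$ and $1^{d-4}$ simple zeros, and $d-4=(d-1)-3$, so the inductive hypothesis applies. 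When $p_i=2$, however, the collision destroys the simple pole together with \emph{two} simple zeros ($-2+1+1=0$), producing $\kappa^i=(\dots,0,\dots,1^{d-5})$ of degree $d-1$ with only $d-5<(d-1)-3$ simple zeros. These strata lie strictly outside the range of the theorem being proved, so you cannot invoke irreducibility of $\phi'_*\overline{\X}(\kappa^i)$ for $p_i=2$, and non-proportionality of the classes (Lemma~\ref{nonproport}) alone no longer forces the intersection of all the $\phi'_*\overline{\X}(\kappa^i)$ to have codimension two: a possibly reducible divisor can share an irreducible component with the others even when the total classes are non-proportional.

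The paper closes exactly this hole with an argument your proposal lacks. If the intersection had codimension one, the irreducible divisors $\phi'_*\overline{\X}(\kappa^j)$ with $p_j\ge3$ (these \emph{are} covered by induction) would all have to coincide with a single irreducible component of each $\phi'_*\overline{\X}(\kappa^i)$ with $p_i=2$. This is ruled out by a moving-curve computation: take the test curve $B_i$ of Lemma~\ref{nonpropgenuszero}, in which only the $i$-th marked point moves on a fixed rational curve; irreducible curves of this class sweep out an open dense subset of $\overline{\mathcal{M}}_{0,n}$, so $B_i\cdot D\ge 0$ for every effective divisor $D$. For $p_i=2$ one computes $B_i\cdot[D^n_{\kappa^i}]=0$ while $B_i\cdot[D^n_{\kappa^j}]=m-1>0$ for $p_j\ge 3$, and an effective divisor with positive $B_i$-degree cannot be a component of an effective divisor with $B_i$-degree zero, since the residual effective divisor would then have negative $B_i$-degree. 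Only with this step does the codimension-two conclusion --- and hence the contradiction with positive-dimensional fibres --- go through; without it, or some substitute, your induction does not close, even though its skeleton is the right one.
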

The following is an example of the application of this theorem. 

\begin{restatable}{ex}{exgenuszero}\label{exgenuszero}
Consider the monodromy group
$$M=\{(12)(34),(135),(132),(34),(45)\}$$
that gives a connected degree $d=5$ cover of the projective line by a genus $g=0$ curve. Theorem~\ref{Hurwitzgenuszero} implies the associated Hurwitz space is irreducible.
\end{restatable}

\begin{ack}
I am grateful to Ana-Maria Castravet, Dawei Chen, Maksym Fedorchuk, Quentin Gendron, Martin M\"{o}ller and Makoto Suwama for various useful discussions related to this circle of ideas and the anonymous referee for invaluable corrections and suggestions. The author was supported by the Alexander von Humboldt Foundation during the preparation of this article.
\end{ack}
\section{Preliminaries}

\subsection{Local charts for the strata of meromorphic differentials}\label{charts}
Any holomorphic flat surface with no vertical saddle connections can be obtained by the Veech or zippered rectangle construction~\cite{Veech}. Boissy~\cite{Boissy} showed how this generalises to the meromorphic case by the infinite zippered rectangle construction and provided a set of charts for $\HH(\kappa)$ in the meromorphic case. In this section we provide a brief summary of the construction of these charts and an example.

First we construct the basic domains that will be glued to obtain the charts. Let $\zeta=(v_1,...,v_n)\in \CC^n$ with Re$(v_i)>0$. Consider the following broken line $L$ in $\CC$:
\begin{itemize}
\item
the half-line $\RR^-$
\item
the concatenation of the $n$ vectors $v_i$ in order from the origin
\item
the horizontal half-line from the end of the above concatenation to the right.
\end{itemize}
We consider the subset $D^+(v_1,...,v_n)$ (or $D^-(v_1,...,v_n)$) as the set of complex numbers above (or below respectively) the broken line $L$. Similarly, if $n\geq 1$ we define $C^+(v_1,...,v_n)$ (or $C^-(v_1,...,v_n)$) as the set of complex numbers above (or below respectively) the concatenation of the $n$ vectors $v_i$ in order from the origin. The boundary of $C^+(v_1,...,v_n)$ or $C^-(v_1,...,v_n)$ will consist of two infinite vertical half-lines. Identifying the two half-lines we obtain an infinite half-cylinder with polygonal boundary. Figure 1 gives an example of how domains of this type can be glued together to obtain a flat surface in $\HH(2,1,-1,-2)$.
\begin{figure}[htbp]
\begin{center}
\begin{overpic}[width=0.45\textwidth]{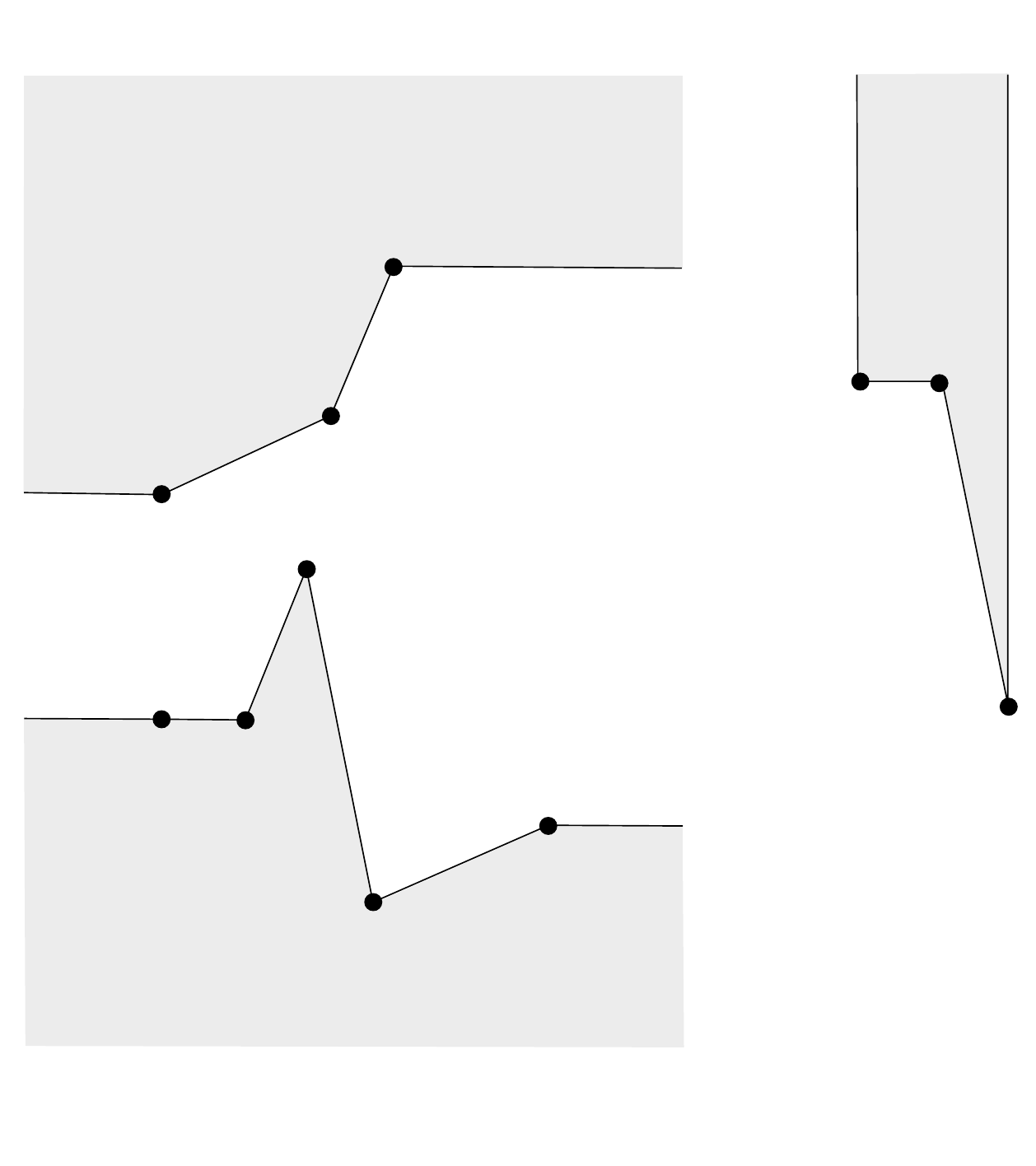}

\put (22, 85){$D^+(v_4,v_2)$}
\put (6, 54){$l_1$}
\put (20, 58){$v_4$}
\put (32, 70){$v_2$}
\put (42, 74){$l_2$}

\put (20, 16){$D^-(v_1,v_2,v_3,v_4)$}
\put (6, 40){$l_1$}
\put (16, 40){$v_1$}
\put (20,45){$v_2$}
\put (30,38){$v_3$}
\put (38,28){$v_4$}
\put (51,31){$l_2$}

\put (72, 33){$C^+(v_1,v_3)$}
\put (70, 78){$l_3$}
\put (87.5, 78){$l_3$}
\put (75.5, 64.5){$v_1$}
\put (75.5, 64.5){$v_1$}
\put (79, 55){$v_3$}

\put (-20,-0){\footnotesize{Figure 1: Basic domains and gluing data of a flat surface in $\HH(2,1,-1,-2)$ }}
\end{overpic}
\end{center}
\end{figure}

Creating a set of charts for meromorphic stratum $\HH(\kappa)$ is simply a matter of giving a set of domains and gluing data.
For any $\zeta=(v_1,...,v_n)\in \CC^n$ with Re$(v_i)>0$ consider the following combinatorial data:
\begin{itemize}
\item
a collection $\bf{n}^+$ of integers $0=n_0^+\leq n_1^+\leq...\leq n_d^+<...<n_{d+s^+}^+=n$
\item
a collection $\bf{n}^-$ of integers $0=n_0^-\leq n_1^-\leq...\leq n_d^-<...<n_{d+s^-}^-=n$
\item
a pair of permutations $\pi_t,\pi_b\in S_n$
\item
a collection $\bf{d}$ of integers $0=d_0<d_1<d_2<...<d_r=d$.
\end{itemize}
For $i\in\{0,...,d-1\}$ define basic domains $D_i^+(v_{\pi_t(n_i^++1)},...,v_{\pi_t(n^+_{i+1})})$ and  $D_i^-(v_{\pi_b(n_i^++1)},...,v_{\pi_b(n^+_{i+1})})$. For $i\in\{d,...,d+s^+-1\}$ define basic domains $C_i^+(v_{\pi_t(n_i^++1)},...,v_{\pi_t(n^+_{i+1})})$  and for $i\in\{d,...,d+s^--1\}$ define basic domains $C_i^-(v_{\pi_b(n_i^++1)},...,v_{\pi_b(n^+_{i+1})})$.

Now glue these domains as follows:
\begin{itemize}
\item
each vector corresponding to a $v_i$ in a $+$domain is glued to the corresponding vector in a $-$domain
\item
for each $C_i^+$ and $C_i^-$ the two vertical lines are glued
\item
each left line of a domain $D^+_i$ is glued to the left line of the domain $D_i^-$
\item
for each $i\in \{1,...,d\}\setminus \{d_1,...,d_r\}$ the right line of domain $D^-_i$ is glued to the right line of $D_{i+1}^+$
\item
for $i=d_k$, $k>0$, the right line of the domain $D_i^-$ is glued to the right line of $D^-_{d_{k+1}-1}$.
\end{itemize}
Considering only the combinatorial data that gives a connected surface, we obtain a flat surface with $s=s^++s^-$ simple poles and $r$ non-simple poles of order $d_i+1$ for $i=1,...,r$. The type of conical singularities, or the order of the zeros, will be determined by the combinatorial data. For example, the flat surface given in Figure 1 can be given as the vector $(v_1,v_2,v_3,v_4)$ and combinatorial data $d=1, s^+=1,s^-=0, {\bf n}^+=(2,4),{\bf n}^-=(4), \pi_t=(4,2,1,3), \pi_b= (1,2,3,4)$ and ${\bf d}=(0,1)$.

The parameter $\zeta$ is uniquely defined and the saddle connections form a basis of the relative homology $H_1(S,\sum,\ZZ)$ where $\sum$ is the set of conical singularities or zeros of the differential. Hence $n=2g+|\kappa|-2$. Boissy~\cite[Proposition 3.7]{Boissy} showed that analogous to the holomorphic case, any meromorphic translation surface with no vertical saddle connection is obtained by the infinite zippered rectangle construction. But on any meromorphic flat surface the set of saddle connections is at most countable. Hence it is always possible to rotate a flat surface so that there are no vertical saddle connections. Hence up to possible rotation, the infinite zippered rectangle construction provides a set of local charts for meromorphic stratum $\HH(\kappa)$.

\subsection{Divisor theory on $\Mgnb$}
The first Chern class $\lambda$, of the Hodge bundle on $\Mgnb$ and the first Chern class $\psi_i$, of the cotangent bundle on $\Mgnb$ associated with the $i$th marked point for $1\leq i\leq n$ give extensions of the classes that generate $\Pic_\QQ(\Mgn)$. 

The boundary $\partial\Mgnb=\Mgnb-\Mgn$ of the compactification is codimension one. Let $\Delta_0$ be the locus of curves in $\Mgnb$ with a nonseparating node and $\Delta_{i:S}$ for $0\leq i\leq g$, $S\subset \{1,\dots,n\}$ be the locus of curves with a separating node that separates the curve such that one of the components has genus $i$ and contains precisely the marked points in $S$. Hence we require $|S|\geq 2$ for $i=0$ and $|S|\leq n-2$ for $i=g$ and observe that $\Delta_{i:S}=\Delta_{g-i:S^c}$. The boundary divisors are irreducible and we denote the class of $\Delta_{i:S}$ in $\Pic_\QQ(\Mgnb)$ by $\delta_{i:S}$. See~\cite{AC}\cite{HarrisMorrison} for more information.

For $g\geq 3$, these divisors freely generate $\Pic_\QQ(\Mgnb)$. 

For $g=2$, the classes $\lambda$, $\delta_0$ and $\delta_1$ generate $\Pic_\QQ(\overline{\mathcal{M}}_2)$ with the relation
\begin{equation*}
\lambda=\frac{1}{10}\delta_0+\frac{1}{5}\delta_1.
\end{equation*}
Further, $\lambda,\delta_0,\psi_i$ and $\delta_{i:S}$ generate $\Pic_\QQ(\overline{\mathcal{M}}_{2,n})$ with this relation pulled back under the morphism $\varphi:\overline{\mathcal{M}}_{2,n}\longrightarrow \overline{\mathcal{M}}_{2}$ that forgets the $n$ marked points.

For $g=1$ there are two further relations in $\Pic_\QQ(\overline{\mathcal{M}}_{1,n})$
\begin{equation*}
\lambda=\frac{1}{12}\delta_0=\psi_i-\sum_{i\in S}\delta_{0:S}
\end{equation*}
for $1\leq i\leq n$.

For $g=0$ the relations in $\Pic_\QQ(\overline{\mathcal{M}}_{0,n})$ are
\begin{equation*}
\psi_i+\psi_j=\sum_{i\in S,j\nin S}\delta_{0:S}
\end{equation*}
for any $i\ne j$.

\subsection{Maps between moduli spaces}\label{maps}
Maps between moduli spaces are a useful tool in the computation of divisor classes. For a fixed general $[X,q,q_1]\in \mathcal{M}_{h,2}$ consider the map 
\begin{eqnarray*}
\begin{array}{cccc}
\pi_h:&\overline{\mathcal{M}}_{g,n}&\rightarrow& \overline{\mathcal{M}}_{g+h,n}\\
&[C,p_1,\dots,p_n]&\mapsto&[C\bigcup_{p_1=q}X,q_1,p_2,\dots,p_n].
\end{array}
\end{eqnarray*} 
that glues points $p_1$ and $q$ to form a node. The pullback of the generators of $\Pic_\QQ(\overline{\mathcal{M}}_{g+h,n})$ are presented in~\cite{AC}
\begin{equation*}
\pi_h^*\lambda=\lambda, \hspace{0.5cm}\pi_h^*\delta_0=\delta_0,\hspace{0.5cm}\pi_h^*\delta_{h:\{1\}}=-\psi_1
\end{equation*}
and
\begin{equation*}
\pi_h^*\psi_i=\begin{cases}
0 &\text{ for i=1}\\
\psi_i &\text{ for $i=2,\dots,n$}
\end{cases}
\end{equation*}
and for $1\in S$
\begin{equation*}
\pi_h^*\delta_{i:S}=\begin{cases}
0 &\text{ for $i<h$}\\
-\psi_1 &\text{ for $i=h,S=\{1\}$}\\
\delta_{i-h:S}&\text{ otherwise.}\\
\end{cases}
\end{equation*}

\section{Zero residue strata of differentials}
\subsection{Residue conditions}\label{defnzero}
Fix
\begin{equation*}
\kappa=(k_1,...,k_m,-p_1,...,-p_r,(-1)^s),
\end{equation*} 
with $k_i>0$ and $p_i>1$ and consider a chart as described in \S\ref{charts} for $\HH(\kappa)$ 
given by ${\bf n}^+,{\bf n}^- ,\pi_t,\pi_b, s^+,s^-$ and ${\bf d}$ where $d_0=0$ and $d_i=\sum_{j=1}^i(p_j-1)$ for $i\geq 1$. The residue at the pole $-p_i$ is given by
\begin{equation*}
\displaystyle{\frac{1}{2\pi i} \sum_{\tiny{j=n^+_{d_{i-1}}+1}}^{n^+_{d_i}}\left( v_{\pi_t(j)}   -v_{\pi_b(j)}   \right),}
\end{equation*} 
where in such equations the saddle connections $v_i$ refers to the length of the saddle connection under the flat metric. This formula is obtained by integrating the differential around an anti-clockwise closed loop enclosing the pole of interest.  Hence the residue at a simple pole in a basic domains $C^+(v_1,...,v_m)$ or $C^-(v_1,...,v_m)$ are given by $(1/2\pi i)(v_1+...+v_m)$ and $(1/2\pi i)(-v_1-...-v_m)$ respectively.

For example, from Figure 1 we see that the residue at the pole of order $2$ is
\begin{equation*}
\frac{1}{2\pi i}(v_4+v_2-v_4-v_3-v_2-v_1)=\frac{1}{2\pi i}(-v_1-v_3),
\end{equation*}
while the residue at the simple pole is
\begin{equation*}
\frac{1}{2\pi i}(v_1+v_3)
\end{equation*}
and as expected the sum of the residues is zero. Observe that the sum of the residues across all poles on any surface will be zero as each vector appears once in a $+$domain and once in a $-$domain.

\begin{defn}
The \emph{stratum of zero residue abelian differentials with signature $\kappa$} is defined as
\begin{equation*}
\HH_Z(\kappa):=\{[C,\omega]\in \HH(\kappa)   \hspace{0.15cm}| \hspace{0.15cm}\res_{p}(\omega)=0\text{ for all $p$ non-simple poles of $\omega$}\}.
\end{equation*}
Further we define the \emph{stratum of zero residue canonical divisors with signature $\kappa$} as
\begin{equation*}
\ZZZ(\kappa):=\{[C,p_1,...,p_m]\in {\mathcal{M}}_{g,m}   \hspace{0.15cm}| \hspace{0.15cm}k_1p_1+...+k_mp_m\sim(\omega)\sim K_C \text{ and $\res_{p_i}(\omega)=0$ for $k_i\leq -2$}\}.
\end{equation*}
\end{defn}

\begin{ex}
Consider $\kappa$ with a single pole. The pole is necessarily non-simple and we have $\HH_Z(\kappa)=\HH(\kappa)$.
\end{ex}

\begin{ex}
Consider $\kappa$ with two or more poles including exactly one simple pole. For any surface in $\HH(\kappa)$ the sum of the residues at the non-simple poles must add to the non-zero residue at the simple pole to give zero. Hence $\HH_Z(\kappa)$ is empty. 
\end{ex}

\begin{ex}\label{g0res}
Consider a genus $g=0$ surface in $\HH(2,-2,-2)$. The resulting differential is given locally by $c(1-z)^2z^{-2}dz$ for some constant $c\in \CC^*$. Hence the residue at $0$ and $\infty$ are necessarily non-zero. Alternatively, observe $\dim\HH(2,-2,-2)=1$. Hence any surface appears in a one dimensional chart with $\zeta=v_1\in \CC$ with Re$(z_1)>0$. Necessarily $v_1$ must appear in the the basic domains for both poles for the surface to be connected and hence the residues equal $\pm v_1/2\pi i\ne 0$. From one final perspective, on a genus $g=0$ surface with one conical singularity the length of any saddle connection is obtained by integrating the differential along a closed path. If all residues are zero then the length is necessarily zero providing a contradiction. 
\end{ex}

For $\kappa=(k_1,...,k_m,-p_1,...,-p_r,(-1)^k)$, with $p_i\geq 2$ and $k_i\geq 1$ the image of the residue map 
\begin{equation*}
\res: \HH(\kappa)\longrightarrow \CC^r\times (\CC^*)^k
\end{equation*}
was completely classified by Gendron and Tahar~\cite{{GendronTahar}}. In our case their results provide the following.

\begin{prop}\label{existence}
$\HH_Z(\kappa)$ is empty if and only if 
\begin{enumerate}
\item
$k=1$; or
\item
$g=0, k=0$ and $k_i\geq \sum_{j=1}^r(p_j-1)$ for some $i$.
\end{enumerate}
\end{prop}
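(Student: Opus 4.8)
The plan is to read off both directions from the Gendron--Tahar classification of the image of $\res$, reinforced by a self-contained argument in the genus-zero exact case. Since every meromorphic differential is closed, the residue theorem forces the sum of all residues to vanish; hence the residue vectors relevant to $\HH_Z(\kappa)$ are exactly those that are zero at the $r$ non-simple poles and (necessarily) nonzero at the $k$ simple poles, subject to total sum zero. The proposition amounts to deciding when this target set meets the image of $\res$.

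First I would treat the emptiness (necessity) direction. If $k=1$, demanding zero residues at all non-simple poles forces the single simple-pole residue to equal minus their sum, hence to vanish; but a simple pole has nonzero residue, so the target set is itself empty and $\HH_Z(\kappa)=\emptyset$. For $g=0,\,k=0$, any $[\PP^1,\omega]$ with all residues zero is exact: the first de Rham cohomology of $\PP^1$ punctured at the poles is spanned by the residue classes, so a closed meromorphic differential with vanishing residues is $\omega=dh$ for a rational function $h$. Writing $\omega=h^*(dw)$ realizes $h:\PP^1\to\PP^1$ as a map of degree $\sum_{j}(p_j-1)$ whose poles, of orders $p_j-1$, produce the poles of $\omega$ of orders $p_j$, and whose ramification over finite values produces the zeros, a zero of order $k_i$ corresponding to ramification index $k_i+1$. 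Since a ramification index cannot exceed the degree, $k_i+1\leq \sum_{j}(p_j-1)$; thus if some $k_i\geq \sum_{j}(p_j-1)$ no such $h$ exists and $\HH_Z(\kappa)=\emptyset$.

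For the converse (sufficiency) I would invoke the Gendron--Tahar description of the image of $\res$ and check that in each remaining case the desired residue vector is attained. When $k\geq 2$, the target contains vectors with nonzero simple-pole residues summing to zero, and their classification shows such vectors lie in the image in every genus. When $k=0$ and $g\geq 1$, the all-zero vector (a differential of the second kind) is realized because the extra genus removes the ramification obstruction seen above. Finally, when $k=0$, $g=0$, and every $k_i<\sum_{j}(p_j-1)$, one constructs the exact differential by solving the corresponding genus-zero Hurwitz existence problem: produce a rational map of degree $\sum_{j}(p_j-1)$ with pole profile $(p_1-1,\dots,p_r-1)$ and prescribed finite ramification indices $k_i+1$, each strictly below the degree, which is precisely the situation covered by Gendron and Tahar.

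The main obstacle is pinning down the exact form of the Gendron--Tahar classification and matching their exceptional genus-zero stratum to the inequality $k_i\geq\sum_{j}(p_j-1)$. The self-contained exact-differential argument makes the emptiness half transparent, so the real work lies in confirming attainability throughout the complementary genus-zero range (the Hurwitz existence problem) and verifying that no further exceptional residue configurations arise when $k\geq 2$ or $g\geq 1$.
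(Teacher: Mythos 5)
Your proposal is correct, and at bottom it rests on the same pillar as the paper: the paper offers no proof of this proposition at all, simply asserting that the Gendron--Tahar classification of the image of $\res: \HH(\kappa)\to \CC^r\times(\CC^*)^k$ "provides the following," so the substantive nonemptiness (sufficiency) direction is in both cases outsourced to that reference. What you add is a self-contained treatment of the emptiness direction, and this matches material the paper distributes around the proposition rather than inside a proof: your $k=1$ argument via the residue theorem is exactly the paper's Example preceding the proposition (two or more poles with exactly one simple pole), and your exactness-plus-ramification-bound argument for $g=0$, $k=0$ (integrate to get a rational $h$ of degree $\sum_j(p_j-1)$, note a zero of order $k_i$ gives ramification index $k_i+1\leq\deg h$) is the same mechanism the paper invokes both in its Example for $\HH(2,-2,-2)$ and in the later Remark in the Hurwitz section, where this bound is called the "inconvenient vertex" theorem and again attributed to Gendron--Tahar. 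So your write-up is, if anything, more explicit than the paper's; the one caveat you correctly flag yourself is that the attainability claims for $k\geq 2$, for $g\geq 1$ with $k=0$, and for the genus-zero range $k_i<\sum_j(p_j-1)$ are not proved but matched against the cited classification, which is precisely the paper's own level of rigor here.
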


\begin{prop}\label{affinemanifold}
If $\kappa$ does not satisfy $(a)$ and $(b)$ above, then $\HH_Z(\kappa)$ is cut out in $\HH(\kappa)$ by linear equations in period coordinates with real coefficients and zero constant term and 
\begin{equation*}
\text{codim}(\HH_Z(\kappa))=\begin{cases}
r-1 &\text{ for $k=0$}\\
r &\text{ for $k\geq 2$}.
\end{cases}
\end{equation*} 
\end{prop}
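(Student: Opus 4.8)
The plan is to read both assertions directly off the period-coordinate charts of \S\ref{charts} together with the residue formula of \S\ref{defnzero}. Fixing such a chart for $\HH(\kappa)$ with coordinates $v_1,\dots,v_n$, I would first observe that $\HH_Z(\kappa)$ is by definition the locus where $\res_{-p_i}(\omega)=0$ for each non-simple pole $-p_1,\dots,-p_r$, and that the residue formula
\[
\res_{-p_i}(\omega)=\frac{1}{2\pi i}\sum_{j=n^+_{d_{i-1}}+1}^{n^+_{d_i}}\bigl(v_{\pi_t(j)}-v_{\pi_b(j)}\bigr)
\]
writes each such condition, after clearing the factor $\tfrac{1}{2\pi i}$, as the vanishing of an integer combination of the $v_j$ with coefficients in $\{-1,0,1\}$. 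This already yields the first assertion: $\HH_Z(\kappa)$ is cut out in each chart by homogeneous $\CC$-linear equations with real (indeed integer) coefficients and zero constant term. Since these functionals are $\CC$-linear they carve out a complex-linear subspace, so the codimension is simply the number of independent residue functionals, and the remainder of the argument is a rank computation.

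For the rank I would exploit the one relation that always holds: as each $v_j$ occurs once in a $+$ domain and once in a $-$ domain, summing the residues over \emph{all} poles telescopes to $\sum_{i=1}^r\res_{-p_i}(\omega)+\sum_{\ell=1}^k\res_{q_\ell}(\omega)=0$, where $q_1,\dots,q_k$ are the simple poles. The main obstacle is to show that this is the \emph{only} linear relation among the residue functionals. I would argue this homologically: the small loops $\gamma_1,\dots,\gamma_{r+k}$ encircling the poles span a corank-one subspace of $H_1(S;\ZZ)$, subject precisely to $\sum\gamma_i=0$; and because the singularity set $\Sigma$ is finite, the long exact sequence of the pair $(S,\Sigma)$ has $H_1(\Sigma)=0$ and hence gives an injection $H_1(S;\ZZ)\hookrightarrow H_1(S,\Sigma;\ZZ)$, so the $\gamma_i$ remain independent modulo their sum in the relative homology whose dual supplies the tangent directions. (This independence is also the input behind the Gendron--Tahar determination of the residue image used for Proposition~\ref{existence}, which could be cited instead.)

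Granting this, the codimension count becomes bookkeeping on where the single relation sits. When $k=0$ the relation reads $\sum_i\res_{-p_i}=0$ and lies in the span of the very functionals being set to zero, so they have rank $r-1$ and $\HH_Z(\kappa)$ has codimension $r-1$. When $k\geq 2$ the relation involves the simple-pole residues with nonzero coefficients, so it is not a relation among $\res_{-p_1},\dots,\res_{-p_r}$ alone; these are then independent and the codimension is $r$. The excluded value $k=1$ is exactly the case where imposing $\res_{-p_i}=0$ for all $i$ would force the unique simple-pole residue to vanish, which is impossible, so $\HH_Z(\kappa)=\emptyset$ there; this recovers case~(a) of Proposition~\ref{existence} and explains why only $k=0$ and $k\geq 2$ occur. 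Since $\kappa$ is assumed to avoid cases~(a) and~(b), $\HH_Z(\kappa)$ is nonempty, and being cut out by functionals of chart-independent rank it is pure of the stated codimension: the number of non-simple poles when $k\geq 2$, and one less when $k=0$, as asserted.
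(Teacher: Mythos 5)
Your proposal is correct, and its first half — reading off from the chart residue formula that each condition is a homogeneous linear equation with coefficients in $\{-1,0,1\}$ after clearing $\tfrac{1}{2\pi i}$ — is exactly the paper's argument. Where you genuinely diverge is the rank computation. The paper stays entirely inside the chart combinatorics: a coordinate whose $+$ and $-$ occurrences belong to the same pole drops out of every residue equation, the surviving coordinates are those gluing different poles' domains together, and connectedness of the surface is invoked in a single sentence ("any relation in the residues would violate the connectedness of the surface") to conclude; fleshed out, this says the residue system is the signed incidence matrix of a connected graph on the poles, whose only left-kernel vector is the all-ones relation. You instead argue intrinsically: residues are periods over the puncture loops $\gamma_i$, the only relation among these in $H_1(S;\ZZ)$ is $\sum\gamma_i=0$, and the long exact sequence of the pair $(S,\Sigma)$ (with $H_1(\Sigma)=0$ since $\Sigma$ is finite) injects $H_1(S)$ into the relative homology whose dual supplies the period coordinates, so no further relations can appear. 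Your route buys chart-independence of the rank and, more importantly, makes explicit what the paper leaves implicit: the paper's sentence, read literally, is in tension with the relation $\sum\res=0$ that genuinely exists when $k=0$, and the paper never performs the $k=0$ versus $k\geq 2$ bookkeeping, the exclusion of $k=1$, or the appeal to nonemptiness needed for purity of the codimension — all of which you supply. The paper's route buys brevity and avoids any topology beyond the chart description. Two cosmetic points: your phrase ``corank-one subspace of $H_1(S;\ZZ)$'' is not literally correct (that corank is $2g$); you mean the loops satisfy exactly one relation, as your parenthetical ``subject precisely to $\sum\gamma_i=0$'' makes clear. And your codimensions $r-1$ and $r$ (in terms of the number $r$ of non-simple poles) are the intended reading of the proposition's ``$m-1$'' and ``$m$'': the statement's $m$ clashes with the paper's use of $m$ for the number of zeros in $\kappa$, but your count is the one consistent with Lemma~\ref{rest} and with the fact that only $r$ conditions are imposed.
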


\begin{proof}
Any surface in $\HH_Z(\kappa)$ lies in a chart of $\HH(\kappa)$ described in \S\ref{charts}. The residue at each pole is given in this chart by a linear equation in period coordinates with coefficients $\pm1/2\pi i $. If a coordinate appears in a $+$domain and a $-$domain for the same pole it will not appear in a residue equation. Hence the coordinates that appear in the residue equations are the coordinates that connect the surface and any relation in the residues would violate the connectedness of the surface. 
\end{proof}

\section{Degeneration of zero residue conditions}\label{degen}
A pointed curve $[C,p_1,...,p_n]\in\Mgn$ is contained in \emph{the stratum of canonical divisors of type} $\kappa=(k_1,...,k_n)$ denoted ${\PPP}(\kappa)$, if 
\begin{equation*}
\sum_{i}^nk_ip_i\sim K_C.
\end{equation*}
A stable pointed curve $[C,p_1,...,p_n]\in\Mgnb$ is contained in \emph{the moduli space of twisted canonical divisors of type} $\kappa=(k_1,...,k_n)$ defined by Farkas and Pandharipande~\cite{FP}, denoted $\widetilde{\PPP}(\kappa)$, if there exists a collection of (possibly meromorphic) non-zero differentials $\{\eta_j\}$ on the irreducible components $C_j$ of $C$ with $(\eta_j)\sim D_j\sim K_{C_j}$ such that
\begin{enumerate}
\item
The support of $D_j$ contains the set of marked points and the nodes lying in $C_j$, moreover if $p_i\in C_j$ then $\ord_{p_i}(D_j)=k_i$.
\item
If $q$ is a node of $C$ and $q\in C_i\cap C_j$ then $\ord_q(D_i)+\ord_q(D_j)=-2$.
\item
If $q$ is a node of $C$ and $q\in C_i\cap C_j$ such that $\ord_q(D_i)=\ord_q(D_j)=-1$ then for any $q'\in C_i\cap C_j$, we have $\ord_{q'}(D_i)=\ord_{q'}(D_j)=-1$. We write $C_i\sim C_j$.
\item
If $q$ is a node of $C$ and $q\in C_i\cap C_j$ such that $\ord_q(D_i)>\ord_q(D_j)$ then for any $q'\in C_i\cap C_j$ we have $\ord_{q'}(D_i)>\ord_{q'}(D_j)$. We write $C_i\succ C_j$.
\item
There does not exist a directed loop $C_1\succeq C_2\succeq...\succeq C_k\succeq C_1$ unless all $\succeq$ are $\sim$.
\end{enumerate}  
In particular, the last condition implies that if $q$ is a node of $C$ and $q\in C_i\cap C_j$ such that $\ord_q(D_i)=\ord_q(D_j)=-1$ then for any $q'\in C_i\cap C_j$ we have $\ord_{q'}(D_i)=\ord_{q'}(D_j)=-1$. Farkas and Pandharipande showed that in addition to the main component $\overline{\PPP}(\kappa)$ containing $\PPP(\kappa)$, this space contained extra components completely contained in the boundary of the moduli space.
Bainbridge, Chen, Gendron, Grushevsky and M\"oller~\cite{BCGGM1} provided the condition that a twisted canonical divisor lies in the main component. Let $\Gamma$ be the dual graph of $C$. A twisted canonical divisor of type $\kappa$ is the limit of twisted canonical divisors on smooth curves if there exists a twisted differential $\{\eta_j\}$ on $C$ such that
\begin{enumerate}
\item
If $q$ is a node of $C$ and $q\in C_i\cap C_j$ such that $\ord_q(D_i)=\ord_q(D_j)=-1$ then $\res_q(\eta_i)+\res_q(\eta_j)=0$; and
\item
there exists a full order on the dual graph $\Gamma$, written as a level graph $\overline{\Gamma}$, agreeing with the order of $\sim$ and $\succ$, such that for any level $L$ and any connected component $Y$ of  $\overline{\Gamma}_{>L}$ that does not contain a prescribed pole we have
\begin{equation*}
\sum_{\begin{array}{cc}\text{level}(q)=L, \\q\in C_i\subset Y\end{array}}\res_{q}(\eta_i)=0
\end{equation*} 
\end{enumerate}
Condition $(b)$ is known as the \emph{global residue condition}.

The question for us is the conditions on the twisted differential such that the the twisted canonical divisor is a limit of smooth canonical divisors with conditions on the residues. We record the result as the following theorem.

\ZeroResidueGRC*

\begin{proof}
This proof follows by simple alteration of the proof the Global Residue Theorem~\cite{BCGGM1} that we outline here.

The proof that the conditions are necessary follows from the proof of the necessity of the Global Residue Theorem~\cite[\S 4.1]{BCGGM1}. Part (a) follows directly, while part (c) follows by the alteration to this proof that we instead let $Y$ be a connected component of $X_{>L}$ that contains no marked non-zeroed poles (rather than no marked poles). The  surfaces $Y_t$ then contain no prescribed non-zeroed pole for $t\ne0$ and hence equations $(4.3)$ and $(4.4)$ and the rest of the proof follow verbatim. Part (b) follow by a minor alteration of this argument to consider the residues at zeroed poles on components of the twisted differential in the limit.

The proof that the conditions are sufficient also follows by a modification of either of the two proofs of smoothing presented in~\cite{BCGGM1}. For example, consider the flat geometric proof of smoothing~\cite[\S 5]{BCGGM1}. The surgery in the upper level surface in the slit residue construction can be made to maintain residues at zeroed poles in higher level provided not all marked poles in higher level are zeroed. Further, it is clear that maintaining zero residues at all marked poles in higher levels is possible via the slit residue construction if condition (c) is satisfied. Finally, observe that the slit residue construction maintains any zero residues at poles on the lower level after smoothing. Hence the smoothing will satisfy condition (b). 
\end{proof}

\section{Divisor class computation}\label{class}
In this section we compute the class of divisors in $\Mgnb$ coming from the strata of zero residue abelian differentials. In~\cite{Mullane2} the author used the method of maps between moduli spaces to compute many classes of divisors $D^n_\kappa$  coming from the strata of abelian differentials. Consider the signature
\begin{equation*}
\kappa=(k_1,...,k_n,f_1^{\alpha_1},...,f_m^{\alpha_m})
\end{equation*}
where $f_i\ne f_j$ for $i\ne j$. The divisor $D^n_\kappa$  in $\Mgnb$ is defined as:
\begin{equation*}
D^n_\kappa:=\frac{1}{\alpha_1!...\alpha_m!}\varphi_*\overline{\PPP}(\kappa)
\end{equation*}
where $\varphi$ forgets the last $\sum\alpha_i$ marked points equal to $g-2$ or $g-1$ for $\kappa$ a holomorphic or meromorphic signature respectively. The method of class computation in~\cite{Mullane2} extends to our situation where we utilise maps between moduli spaces and the theory of the degeneration of abelian differentials~\cite{BCGGM1} to pullback a known divisor of type $D^n_\kappa$ coming from the strata of meromorphic differentials to obtain classes coming from the strata of zero residue abelian differentials. In~\cite{Mullane2} the author presents a detailed exposition of this method. We begin with a definition of the divisors of interest. 

\begin{defn}\label{divdefn}
Let $j,k$ be the number of non-simple and simple poles respectively in meromorphic signature 
\begin{equation*}
\kappa=(k_1,...,k_n,f_1^{\alpha_1},...,f_m^{\alpha_m})
\end{equation*}
where $f_i\ne f_j$ for $i\ne j$. For $j\geq 1$ with $|\kappa|=g-2+n+j$ if $k=0$ and $|\kappa|=g-1+n+j$ if $k\geq 2$. Then $Z^n_\kappa$ for $n\geq 1$ is the divisor in $\Mgnb$ defined as:
\begin{equation*}
Z^n_\kappa:=\frac{1}{\alpha_1!...\alpha_m!}\varphi_*\overline{\ZZZ}(\kappa)
\end{equation*}
where $\varphi$ forgets the last $\sum\alpha_i$ marked points equal to $g-2+j$ or $g-1+j$ for $k=0$ or $k\geq 2$ respectively.
\end{defn}

Proposition~\ref{existence} states when $\ZZZ(\kappa)$ is empty. Hence outside of the exceptions in genus $g=0$, Proposition~\ref{affinemanifold} shows $\overline{\ZZZ}(\kappa)$ forms a codimension $j-1$ or $j$ subvariety of $\overline{\PPP}(\kappa)$ for $k=0$ or $k\geq 2$ respectively. As $\overline{\PPP}(\kappa)$ is codimension $g$ in $\Mbar{g}{|\kappa|}$ we obtain the image of $\overline{\ZZZ}(\kappa)$ under the pushforward of the forgetful morphism is a divisor\footnote{Recall that the image of any irreducible subvariety contracted by $\varphi$ is defined to be trivial under the pushforward map $\varphi_*$.}.

Before computing classes, we introduce a well-known method of producing expressions for the trivial divisor in genus $g=0$ which will allow us to simplify our class expressions in this case. For any complete graph $\Gamma$ on $n$ vertices with edges labels in $\QQ$, define the edge function $e(i\sim j)$ to return the label of the unique edge between $i$ and $j$. Define
\begin{equation*}
e(i)=\sum_{j\ne i}e(i\sim j)
\end{equation*}
and
\begin{equation*}
e(S)=\sum_{i\in S,j\nin S}e(i\sim j)
\end{equation*}
for any $S\subset \{1,\dots,n\}$. Then any such edge labelled complete graph $\Gamma$ defines a divisor class in $\Pic_\QQ(\MOnb)$,
\begin{equation*}
D(\Gamma)=\sum_{i=1}^n e(i)\psi_i - \sum_{1\in S} e(S)\delta_{0:S}
\end{equation*}
 \begin{lem}\label{trivial}
 $D(\Gamma)$ is trivial.
 \end{lem}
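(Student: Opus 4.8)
The plan is to exploit the fact that the assignment $\Gamma\mapsto D(\Gamma)$ is $\QQ$-linear in the edge labels of $\Gamma$. Indeed, $e(i\sim j)$ is by definition the label of a single edge, so $e(i)$, $e(S)$, and hence $D(\Gamma)=\sum_i e(i)\psi_i-\sum_{1\in S}e(S)\delta_{0:S}$ are all linear functions of the vector of edge labels. The space of edge labelings of the complete graph on $n$ vertices is spanned by the \emph{single-edge} labelings $\Gamma_{ij}$, which assign the label $1$ to the edge between $i$ and $j$ and $0$ to every other edge. Thus it suffices to prove $D(\Gamma_{ij})=0$ for each pair $i\ne j$.

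For the single-edge graph $\Gamma_{ij}$ I would compute the relevant edge functions directly. One has $e(i)=e(j)=1$ and $e(k)=0$ for $k\ne i,j$, so the $\psi$-part of $D(\Gamma_{ij})$ is exactly $\psi_i+\psi_j$. For the boundary part, note that $e(S)=\sum_{a\in S,\,b\notin S}e(a\sim b)$ records whether the unique edge crosses the partition $\{S,S^c\}$; hence $e(S)=1$ precisely when $S$ separates $i$ and $j$ (exactly one of $i,j$ lies in $S$) and $e(S)=0$ otherwise. In particular $e(S)=e(S^c)$, so $e$ is genuinely a function of the underlying partition, and we obtain
\begin{equation*}
D(\Gamma_{ij})=\psi_i+\psi_j-\sum_{\substack{1\in S\\ S\text{ separates }i,j}}\delta_{0:S}.
\end{equation*}
The goal is then to identify this boundary sum with the right-hand side of the genus-$0$ relation $\psi_i+\psi_j=\sum_{i\in S,\,j\notin S}\delta_{0:S}$ recorded earlier in the preliminaries.

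The key observation — and essentially the only point requiring care — is the bookkeeping of representatives. The sum in the relation ranges over subsets $S$ with $i\in S$, $j\notin S$, whereas my sum ranges over subsets $S$ with $1\in S$ that separate $i$ and $j$. Both index sets are in bijection with the set of partitions $\{A,B\}$ of $\{1,\dots,n\}$ with $2\le|A|,|B|\le n-2$ that separate $i$ from $j$: the relation selects the part containing $i$, while $D(\Gamma_{ij})$ selects the part containing $1$. Since $\delta_{0:S}=\delta_{0:S^c}$, the summand depends only on the partition, so the two sums agree. Substituting yields $D(\Gamma_{ij})=(\psi_i+\psi_j)-(\psi_i+\psi_j)=0$, and linearity then gives the claim for arbitrary $\Gamma$.

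I expect no serious obstacle here. The two things to verify carefully are that the size constraints $2\le|S|\le n-2$ match on both sides, so the bijection of separating partitions is exact, and that the degenerate inputs are handled, for instance $i=1$ (where the two representative conventions coincide) and small $n$ such as $n=3$, where $\MOnb$ is a point and every class is trivially zero.
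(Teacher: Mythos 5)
Your proof is correct and is essentially the paper's own argument: the paper multiplies Keel's relation $\psi_i+\psi_j=\sum_{i\in S,\,j\nin S}\delta_{0:S}$ by $e(i\sim j)$ and sums over all edges, which is precisely your decomposition into the single-edge labelings $\Gamma_{ij}$ read in the opposite direction. The bookkeeping point you flag --- matching the two representative conventions via $\delta_{0:S}=\delta_{0:S^c}$ --- is exactly what is implicit in the paper's passage from $\sum_{i<j}e(i\sim j)\sum_{i\in S,\,j\nin S}\delta_{0:S}$ to $\sum_{1\in S}e(S)\delta_{0:S}$.
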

 
 \begin{proof}
 Keel~\cite{Keel} gives the relation
 \begin{equation*}
 \psi_i+\psi_j=\sum_{i\in S,j\nin S}\delta_{0:S}
 \end{equation*}
 for any $i\neq j$. Hence
 \begin{eqnarray*}
 \sum_{1\leq i<j\leq n}e(i\sim j)\left(\psi_i+\psi_j  \right)&=& \sum_{1\leq i<j\leq n}e(i\sim j)\sum_{i\in S,j\nin S}\delta_{0:S}\\
 \sum_{i=1}^n e(i)\psi_i&=& \sum_{1\in S}e(S)\delta_{0:S}
 \end{eqnarray*}
 \end{proof}

\begin{restatable}{thm}{ZRdivI}
\label{thm:ZRdivI}
Consider $\underline{d}=(d_1,...,d_n)$ with $-1\nin \underline{d}$ and $|\{d_i<0\}|=m$ such that $\sum d_i=g-m$. 
For $g\geq 2$
\begin{equation*}
Z^{n}_{\underline{d},1^{g+m-2}}=-\lambda+\sum_{j=1}^n\begin{pmatrix}d_j+1\\2  \end{pmatrix}\psi_j
-0\cdot\delta_0-\sum_{\text{all poles }\nin S}\begin{pmatrix}|d_S+|S^-|-i|+1\\2  \end{pmatrix}\delta_{i:S}.
\end{equation*}
For $g=1$ 
\begin{equation*}
Z^{n}_{\underline{d},1^{m-1}}=\left(-1+\sum_{j=1}^n\begin{pmatrix}d_j+1\\2  \end{pmatrix}\right)\lambda
-\sum_{|S|\geq 2}\left(\begin{pmatrix}|d_S+|S^-||+1\\2  \end{pmatrix}-\sum_{j\in S}\begin{pmatrix}d_j+1\\2  \end{pmatrix}\right)\delta_{0:S}
\end{equation*}
where $S^-=\{i\in S\hspace{0.1cm}|\hspace{0.1cm}d_i<0\}$ and $d_S=\sum_{i\in S}d_i$. 
For $g=0$ 
\begin{equation*}
Z^{n}_{\underline{d},1^{m-2}}=\sum_{j=1}^n f(j)\psi_j-\sum_{1\in S}f(S) \delta_{0:S}
\end{equation*}
where 
$$f(S)=\frac{1}{2}\big|\sum_{i\in S}d_i+|S^-|  \big|.$$
Further, the divisors in $g=0$ are boundary effective.
\end{restatable}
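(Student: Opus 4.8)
The plan is to compute $[Z^n_\kappa]$ by the pullback method of \cite{Mullane2}, with the degeneration of the zero residue condition from Theorem~\ref{ZRGRC} as the new input. The idea is to realise $Z^n_\kappa$ as the pullback of an \emph{already known} stratum divisor under a composition of the gluing morphisms $\pi_h$ of Section~\ref{maps}. Concretely, Theorem~\ref{ZRGRC} says that a general $[C,p_1,\dots,p_n]\in\ZZZ(\kappa)$ is a limit obtained by attaching, at each zeroed pole $p_i$ of order $p_i=-d_i$, a \emph{fixed} genus $(p_i-1)$ canonical tail $[X_i,q_1^i,\dots,q_{p_i-1}^i]$ with $(p_i-2)q_1^i+q_2^i+\dots+q_{p_i-1}^i\sim K_{X_i}$, producing a twisted canonical divisor on the main component of a \emph{holomorphic} stratum in genus $g'=g+\sum_i(p_i-1)$. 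Writing $j$ for the composition of the corresponding tail-gluing maps $\pi_{p_i-1}$, which replace each pole marking by a tail marking and raise the genus to $g'$, the programme is to prove $Z^n_\kappa=j^*D$ for the known divisor $D$ attached to this holomorphic stratum, and then to evaluate the right-hand side using the formulas for $\pi_h^*\lambda$, $\pi_h^*\psi_i$, $\pi_h^*\delta_{i:S}$ recorded in Section~\ref{maps}.

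First I would pin down the identification $Z^n_\kappa=j^*D$. Since each $\pi_h$ attaches a fixed tail along a separating node, the image of $j$ meets the closure of the holomorphic stratum in exactly the boundary configurations described by Theorem~\ref{ZRGRC}, and the Global Residue Condition of \cite{BCGGM1} guarantees these are the only contributing configurations, so $j^*D$ is supported on $\overline{\ZZZ}(\kappa)$ with no spurious twisted-canonical component appearing. \textbf{The main obstacle is precisely this step:} verifying that $j$ is transverse to $D$ at the generic point of $\overline{\ZZZ}(\kappa)$, so that $j^*D$ is reduced and equals $Z^n_\kappa$ rather than a proper multiple of it. I would control this by a local period-coordinate computation at each gluing node, using that the zero residue locus is cut out by period-coordinate equations with real coefficients and vanishing constant term (Proposition~\ref{affinemanifold}); this should show that the smoothing of each tail node is unobstructed and occurs with multiplicity one.

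Granting the identification, the $g\ge 2$ formula becomes bookkeeping with the $\pi_h^*$ formulas. As $\pi_h^*\lambda=\lambda$ and $\pi_h^*\delta_0=\delta_0$, the $\lambda$ and $\delta_0$ coefficients are inherited from $D$, giving $-\lambda$ and $0\cdot\delta_0$; the vanishing $\delta_0$ coefficient reflects that a differential of the second kind cannot acquire a nonseparating node with opposite nonzero residues, consistent with the compact type statement Theorem~\ref{CT}. A marking $p_j$ with $d_j>0$ is untouched by $j$, so $\pi_h^*\psi_j=\psi_j$ transports the coefficient $\binom{d_j+1}{2}$, while a pole marking $p_j$ is glued, so its contribution comes through $\pi_h^*\delta_{h:\{j\}}=-\psi_j$ applied to the separating boundary coefficient of $D$; a careful match of these coefficients against the $\pi_h^*\delta_{i:S}$ formula should yield the stated $\binom{d_j+1}{2}\psi_j$ together with the boundary terms $-\binom{|d_S+|S^-|-i|+1}{2}\delta_{i:S}$. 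The genus $1$ and genus $0$ formulas then follow by rewriting this expression with the low genus relations of the preliminaries, using $\lambda=\tfrac{1}{12}\delta_0=\psi_i-\sum_{i\in S}\delta_{0:S}$ in genus $1$ and $\lambda=0$ together with Keel's relation in genus $0$.

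For the genus $0$ statement I would first reduce the class to the symmetric form $\sum_j f(j)\psi_j-\sum_{1\in S}f(S)\delta_{0:S}$ with $f(S)=\tfrac12\big|\sum_{i\in S}d_i+|S^-|\big|$, by adding a suitable trivial divisor $D(\Gamma)$ from Lemma~\ref{trivial} to absorb the discrepancy between the raw pullback coefficients and the $f$-values; this is possible exactly because the $D(\Gamma)$ are precisely the relations among the $\psi_j$ and $\delta_{0:S}$ coming from Keel's relation. Boundary effectiveness is then the assertion that, after eliminating the $\psi_j$, the resulting combination of the $\delta_{0:S}$ has non-negative coefficients. I expect this to reduce to a triangle inequality for the absolute value in $f$: the coefficient attached to a given $\delta_{0:S}$ is a signed sum of values of $f$ on $S$ and its refinements, and convexity of $t\mapsto|t|$ should force non-negativity. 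The point I would check most carefully is the behaviour at the strata where $\sum_{i\in S}d_i+|S^-|$ changes sign, since there the absolute value is non-smooth and the coefficient estimate is tight.
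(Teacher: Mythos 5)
Your proposal is correct and is essentially the paper's own proof: the paper likewise realises $Z^n_\kappa$ as a pullback of a known stratum divisor class under the gluing maps $\pi_h$ of Section~\ref{maps}, with Theorem~\ref{ZRGRC} justifying the identification, evaluates it via the Arbarello--Cornalba pullback formulas, and in genus $0$ obtains the symmetric form and boundary effectiveness exactly as you sketch, by adding trivial classes $D(\Gamma)$ from Lemma~\ref{trivial} and reducing to the elementary inequality $a(N-b)+b(N-a)\geq N|a-b|$. The only organisational difference is that the paper peels off one pole at a time, inducting on $m$ down to the base case $m=1$ where $Z^n_{\underline{d},1^{g-1}}=D^n_{\underline{d},1^{g-1}}$ is the known M\"uller/Grushevsky--Zakharov theta class, rather than gluing tails at all poles simultaneously and pulling back from a holomorphic stratum divisor.
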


\begin{proof}
Observe that when $m=1$ we have
\begin{equation*}
Z^{n}_{\underline{d},1^{g+m-2}}=D^n_{\underline{d},1^{g-1}}
\end{equation*}
as necessarily the isolated pole must have residue zero. This matches the formula for this class given by M\"uller~\cite{Muller} and Grushevsky and Zakharov~\cite{GruZak}. 

Now assume $m\geq 2$, and without loss of generality, assume $d_1\leq -2$. For a fixed general $[X,q,q_1]\in \mathcal{M}_{h,2}$ consider the map 
\begin{eqnarray*}
\begin{array}{cccc}
\pi_h:&\overline{\mathcal{M}}_{g,n}&\rightarrow& \overline{\mathcal{M}}_{g+h,n}\\
&[C,p_1,\dots,p_n]&\mapsto&[C\bigcup_{p_1=q}X,q_1,p_2,\dots,p_n].
\end{array}
\end{eqnarray*} 
that glues points $p_1$ and $q$ to form a node. Setting $h=-d_1$, by Theorem~\ref{ZRGRC} we have
\begin{equation*}
Z^n_{\underline{d},1^{g+m-2}}=\pi_h^*Z^n_{1,d_2,\dots,d_n,1^{g+h+(m-1)-2}}.
\end{equation*}
Inductively computing the class from the known base cases of $m=1$ provides the result. For $g=0$ this gives
\begin{equation*}
Z^{n}_{\underline{d},1^{m-2}}=\sum_{j=1}^n\begin{pmatrix}d_j+1\\2  \end{pmatrix}\psi_j-\sum_{1\in S}\begin{pmatrix}|d_S+|S^-||+1\\2  \end{pmatrix}   \delta_{0:S}.
\end{equation*}
To show this expression is in fact boundary effective we appeal to Lemma~\ref{trivial}. Let 
\begin{equation*}
f_i=\begin{cases}
d_i+1 &\text{ for $d_i<0$}\\
d_i &\text{ otherwise.}
\end{cases}
\end{equation*}
Hence $\sum f_i=0$. Now consider $\Gamma_1$ the complete graph on $n$ vertices with $e(i\sim j)=-f_if_j$. We have
\begin{eqnarray*}
e(i)&=&-f_i\left(\sum_{j\ne i}f_j  \right)=f_i^2\\
e(S)&=&-\left(\sum_{i\in S}f_i  \right)\left(\sum_{i\nin S}f_i  \right)=(d_S+|S^-|)^2
\end{eqnarray*}
Hence 
\begin{equation*}
D(\Gamma_1)=\sum_{i=1}^nf_i^2\psi_i-\sum_{1\in S}(d_S+|S^-|)^2\delta_{0:S}
\end{equation*}
is trivial and
\begin{equation*}
2Z^{n}_{\underline{d},1^{m-2}}-D(\Gamma_1)=\sum_{i=1}^n|f_i|\psi_i-\sum_{1\in S}\left|\sum_{i\in S} f_i\right|\delta_{0:S}
\end{equation*}
Now consider $\Gamma_2$ the complete graph on $n$ vertices with 
\begin{equation*}
e(i\sim j)=\begin{cases}
-f_if_j &\text{ if $-f_if_j>0$}\\
0 &\text{ otherwise.}
\end{cases}
\end{equation*}
For any $S\subset\{1,\dots,n\}$ define $S^-=\{i\in S|f_i<0\}$ and $S^+=\{i\in S|f_i\geq 0\}$. Further, let
\begin{equation*}
\sum_{f_i>0}f_i=N, \hspace{0.3cm} f_S^+=\sum_{i\in S^-}d_i, \hspace{0.3cm} f_S^-=\left|\sum_{i\in S^-}d_i  \right|.
\end{equation*}
We obtain
\begin{eqnarray*}
e(i)&=&N|f_i|\\
e(S)&=&f_S^-\left(N-f_S^+ \right)+f_S^+\left( N-f_S^-   \right).
\end{eqnarray*}
Hence $Z^{n}_{\underline{d},1^{m-2}}$ is boundary effective if $e(S)\geq |f_S|=|f_S^+-f_S^-|$ and we are left to show that for any $0\leq a,b\leq N$ the inequality
\begin{equation*}
a(N-b)+b(N-a)\geq N|a-b|
\end{equation*}
holds. Without loss of generality we assume $a\geq b$, then the inequality follows as $bN\geq ab$.
\end{proof}

\begin{restatable}{thm}{ZRdivII}
\label{thm:ZRdivII}
Consider $\underline{d}=(d_1,...,d_n)$ where $-1$ has multiplicity $k\geq 2$ in $\underline{d}$ and $|\{d_i<0\}|=m+k$ such that $\sum d_i=g-m-1$ we have for $g\geq 2$
\begin{equation*}
Z^{n}_{\underline{d},1^{g+m-1}}=-\lambda+\sum_{j=1}^n\begin{pmatrix}d_j+1\\2  \end{pmatrix}\psi_j
-\sum_{|S^\text{s}|=0}\begin{pmatrix}|d_S+|S^-|-i|+1\\2  \end{pmatrix}\delta_{i:S}
-\sum_{i=1}^g\sum_{|S^\text{s}|\ne0,k }\begin{pmatrix}d_S+|S^\text{ns}|-i+1\\2  \end{pmatrix}\delta_{i:S}.
\end{equation*}
For $g=1$ this gives
\begin{eqnarray*}
&&Z^{n}_{\underline{d},1^{m}}=\left(-1+\sum_{j=1}^n\begin{pmatrix}d_j+1\\2  \end{pmatrix}\right)\lambda
-\sum_{|S^\text{s}|=0}\left(\begin{pmatrix}|d_S+|S^-||+1\\2  \end{pmatrix}-\sum_{j\in S}\begin{pmatrix}d_j+1\\2  \end{pmatrix}\right)\delta_{0:S}\\
&&-\sum_{|S^\text{s}|=k}\left(\begin{pmatrix}|d_S+|S^-|+1|+1\\2  \end{pmatrix}-\sum_{j\in S}\begin{pmatrix}d_j+1\\2  \end{pmatrix}\right)\delta_{0:S}
-\sum_{|S^\text{s}|\ne0,k}\left(\begin{pmatrix}d_S+|S^\text{ns}|+k-1\\2  \end{pmatrix}   -\sum_{j\in S}\begin{pmatrix}d_j+1\\2  \end{pmatrix}\right)\delta_{0:S}.
\end{eqnarray*}
For $g=0$ this gives
\begin{equation*}
Z_{\underline{d}}=\sum_{d_j\leq-2}|d_j+1|\psi_j
-\sum_{\tiny{\begin{array}{cc}|S^\text{s}|=0\\ d_S+|S^-|<0 \end{array}}}|d_S+|S^-||\delta_{0:S}
\end{equation*}
where $S^-=\{i\in S\hspace{0.1cm}|\hspace{0.1cm}d_i<0\}$, $S^\text{ns}=\{i\in S \hspace{0.1cm}|\hspace{0.1cm}d_i\leq-2\}$ (the non-simple poles in $S$), $S^\text{s}=\{i\in S \hspace{0.1cm}|\hspace{0.1cm}d_i=-1\}$ (the simple poles in $S$) and $d_S=\sum_{i\in S}d_i$. 
\end{restatable}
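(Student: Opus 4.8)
The plan is to run the same inductive argument used in the proof of Theorem~\ref{thm:ZRdivI}, now inducting on the number $m$ of non-simple zeroed poles while carrying the block of $k\geq 2$ simple poles along unchanged; these simple poles never participate in a gluing and are exactly what forces the two regimes appearing in the boundary coefficients. The base case is now $m=0$ rather than $m=1$: when $\underline{d}$ contains no entry $\leq-2$, the zero-residue condition is vacuous, so $\overline{\ZZZ}(\kappa)=\overline{\PPP}(\kappa)$ and hence $Z^n_{\underline{d},1^{g-1}}=D^n_{\underline{d},1^{g-1}}$, a meromorphic stratum divisor with $k$ simple poles whose class is already recorded in~\cite{Mullane2}. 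I would verify directly that this known class specializes to the asserted formula at $m=0$.

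For the inductive step, with $m\geq 1$ I fix a non-simple zeroed pole $d_1\leq -2$, set $h=-d_1$, and use the gluing map $\pi_h\colon\overline{\mathcal{M}}_{g,n}\to\overline{\mathcal{M}}_{g+h,n}$ of Section~\ref{maps}. Exactly as in Theorem~\ref{thm:ZRdivI}, Theorem~\ref{ZRGRC} identifies the preimage and yields the pullback relation
\begin{equation*}
Z^n_{\underline{d},1^{g+m-1}}=\pi_h^*\,Z^n_{1,d_2,\dots,d_n,1^{g+h+(m-1)-1}},
\end{equation*}
in which $d_1$ has been replaced by a simple zero, the genus raised by $h$, and $m$ lowered by one while $k$ is preserved. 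By the inductive hypothesis the class on the right is known in genus $g+h$, and applying the pullback formulas for $\lambda$, $\psi_i$ and $\delta_{i:S}$ from Section~\ref{maps} produces the class on the left. Iterating this $m$ times down to the base case yields the stated formulas for $g\geq 2$ and $g=1$, the latter after massaging the output by the genus-one relations in $\Pic_\QQ(\overline{\mathcal{M}}_{1,n})$.

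The main work, and the genuine departure from Theorem~\ref{thm:ZRdivI}, is the bookkeeping of the boundary coefficients, which now split into two regimes according to the distribution of the simple poles. A separating node contributes a coefficient $\binom{r+1}{2}$ to $\delta_{i:S}$, where $r$ is the order at the node of the limiting twisted differential dictated by Theorem~\ref{ZRGRC}. When all $k$ simple poles lie on one side of the node, the global residue condition forces the residue at the node to vanish on the pole-free side, the order is recorded in absolute value, and $r=|d_S+|S^-|-i|$; this gives the first boundary sum. When both $S$ and $S^c$ contain a simple pole, condition~(3) of Theorem~\ref{ZRGRC} imposes no residue balancing across the node, the sign of the node order is determined, and $r=d_S+|S^{\text{ns}}|-i$; this gives the second sum. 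The crux is to check that the iterated pullbacks $\pi_h^*\delta_{i:S}$ sort every boundary stratum into the correct regime; this is a finite but delicate case analysis keyed to whether $1\in S$, to $i$ versus $h$, and to where the surviving simple poles land after each gluing.

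Finally, for $g=0$ every stable curve is a tree of rational components, so all boundary is of type $\delta_{0:S}$ and the expression emerging from the induction can be recast in the claimed boundary-effective form. Here I would subtract a suitable edge-labelled trivial divisor $D(\Gamma)$ via Lemma~\ref{trivial}, chosen so that the $\psi_j$ coefficients collapse to $|d_j+1|$, supported on the non-simple poles, and the $\delta_{0:S}$ coefficients collapse to $|d_S+|S^-||$, supported on those $S$ containing no simple pole with $d_S+|S^-|<0$, exactly paralleling the simplification carried out at the end of the proof of Theorem~\ref{thm:ZRdivI}.
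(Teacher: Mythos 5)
Your proposal follows essentially the same route as the paper's proof: the base case $m=0$ reducing to the known class of $D^n_{\underline{d},1^{g-1}}$, the inductive step via the gluing map $\pi_h$ with $h=-d_1$ and the pullback identity furnished by Theorem~\ref{ZRGRC}, and the genus-zero simplification by subtracting a trivial divisor $D(\Gamma)$ from Lemma~\ref{trivial} with weights $f_i=d_i+1$ at non-simple poles and $f_i=d_i$ otherwise. The two-regime bookkeeping of boundary coefficients you describe is exactly the case analysis the paper leaves implicit in ``inductively computing the class,'' so the proposal is correct and matches the paper's argument.
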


\begin{proof}
Again, observe that when $m=0$ we have
\begin{equation*}
Z^{n}_{\underline{d},1^{g+m-1}}=D^n_{\underline{d},1^{g-1}}
\end{equation*}
and the formula matches the class given by M\"uller~\cite{Muller} and Grushevsky and Zakharov~\cite{GruZak}. 

Now assume $m\geq 1$, and without loss of generality, assume $d_1\leq -2$. For a fixed general $[X,q,q_1]\in \mathcal{M}_{h,2}$ consider the map 
\begin{eqnarray*}
\begin{array}{cccc}
\pi_h:&\overline{\mathcal{M}}_{g,n}&\rightarrow& \overline{\mathcal{M}}_{g+h,n}\\
&[C,p_1,\dots,p_n]&\mapsto&[C\bigcup_{p_1=q}X,q_1,p_2,\dots,p_n].
\end{array}
\end{eqnarray*} 
that glues points $p_1$ and $q$ to form a node. Setting $h=-d_1$, by Theorem~\ref{ZRGRC} we have
\begin{equation*}
Z^n_{\underline{d},1^{g+m-1}}=\pi_h^*Z^n_{1,d_2,\dots,d_n,1^{g+h+(m-1)-1}}.
\end{equation*}
Inductively computing the class from the known base cases of $m=0$ provides the result. 

For $g=0$ this process gives
\begin{equation*}
Z_{\underline{d}}=\sum_{j=1}^n\begin{pmatrix}d_j+1\\2  \end{pmatrix}\psi_j
-\sum_{\tiny{\begin{array}{cc}|S^\text{s}|=0 \end{array}}}\begin{pmatrix}|d_S+|S^\text{ns}||+1\\2  \end{pmatrix}\delta_{0:S}
-\sum_{\tiny{\begin{array}{cc}1\in S\\|S^\text{s}|\ne 0,k \end{array}}}\begin{pmatrix}d_S+|S^\text{ns}|+1\\2  \end{pmatrix}\delta_{0:S}
\end{equation*}
where $S^\text{ns}=\{i\in S |d_i\leq-2\}$ (the non-simple poles in $S$) and $S^\text{s}=\{i\in S |d_i=-1\}$ (the simple poles in $S$). 

To simplify this expression we appeal to Lemma~\ref{trivial}. Let 
\begin{equation*}
f_i=\begin{cases}
d_i+1 &\text{ for $d_i\leq-2$}\\
d_i &\text{ otherwise.}
\end{cases}
\end{equation*}
Hence $\sum f_i=-1$. Now consider $\Gamma$ the complete graph on $n$ vertices with $e(i\sim j)=-f_if_j$. We have
\begin{eqnarray*}
e(i)&=&-f_i\left(\sum_{j\ne i}f_j  \right)=f_i(f_i+1)\\
e(S)&=&-\left(\sum_{i\in S}f_i  \right)\left(\sum_{i\nin S}f_i  \right)=(d_S+|S^\text{ns}|)(d_S+|S^\text{ns}|+1)
\end{eqnarray*}
Hence 
\begin{equation*}
Z_{\underline{d}}=\sum_{d_j\leq-2}-(d_j+1)\psi_j
-\sum_{\tiny{\begin{array}{cc}|S^\text{s}|=0\\ d_S+|S^-|<0 \end{array}}}-(d_S+|S^-|)\delta_{0:S}.
\end{equation*}
\end{proof}

\section{Positivity}

We restrict to the case of differentials of the second kind, that is, differentials where all residues are zero. The strata of canonical divisors of the second kind are $\ZZZ(\kappa)$ for $-1\nin\kappa$.
\Compacttype*

\begin{proof}
Fix $[X,p_1,\dots,p_n]\in\overline{\ZZZ}(\kappa)$. Consider any twisted differential $(X,\{\eta_{X_i}\})$ and level graph $\Gamma$ that shows $[X,p_1,\dots,p_n]$ to be smoothable. By Theorem~\ref{ZRGRC}, the residues at the $p_i$ in the associated $\eta_j$ are zero. Hence the remaining residues at the nodes in every $\eta_j$ sum to zero. 

Every isolated vertex in the level graph gives the condition that the associated residue is equal to zero. Ascending to an isolated vertex sets the residue to zero by Theorem~\ref{ZRGRC}. Descending to an isolated vertex indicates a pole in a differential $\eta_{X_i}$ in which all other residues have been set to zero. Hence the remaining residue must be zero. Further, an isolated horizontal node is not possible as it would indicate an isolated simple pole in a differential $\eta_{X_i}$ in which all other residues have been set to zero. Hence we remove any isolated vertex from a level graph and set the corresponding residue to zero.

Observe $\Gamma$ and any graph obtained from $\Gamma$ by successively removing isolated vertices is a tree graph. However, any tree graph has degree $2(v-1)$ where $v$ is the number of vertices in the graph and hence must contain a vertex of degree one. 
\end{proof}

Boundary strata are idefined as the closure of the locus of curves of a fixed topological type in $\overline{\mathcal{M}}_{g,n}$. Boundary strata are irreducible with codimension given by the number of nodes appearing in the topological type of the general curve. Restricting to genus $g=0$ we obtain the following theorem.
\Restrictionlemma*

\begin{proof}
If $B$ has codimension $c$ in $\overline{\mathcal{M}}_{0,n}$ then we have introduced $c$ nodes and the generic pointed curve in this boundary strata has $c+1$ components. Hence Theorem~\ref{CT} implies the set of differentials $\{\eta_{X_i}\}$ with signatures $\{\kappa_i\}$ have cumulatively $c$ more poles than $\kappa$. Restricting to each of the $c+1$ components in $B$ we obtain that having a differential of the second kind with signature $\kappa_i$ imposes a codimension $\text{poles}(\kappa_i)-1$ condition. Hence
\begin{eqnarray*}
\text{codim}_B(\overline{\ZZZ}(\kappa)\cap B)= \sum_{i=1}^{c+1}(\text{poles}(\kappa_i)-1)
=  \sum_{i=1}^{c+1}\text{poles}(\kappa_i)  -(c+1)
=\text{poles}(\kappa)-1
\end{eqnarray*}
or the intersection is empty.
\end{proof}

Boundary strata of dimension one are known as $F$-curves. It is conjectured that a divisor $D$ with non-negative intersection with all $F$-curves (such $D$ are said to be $F$-nef) implies non-negative intersection with all effective curves. Known as the $F$-conjecture, this remains open for $\Mgnb$ with $g+n\geq 8$. Gibney, Keel and Morrison~\cite{GKM} showed that the case $\overline{\mathcal{M}}_{0,g+n}$ implies $\Mgnb$. Restricting the strata of differentials of the second kind to genus $g=0$ we obtain.

\Fnef*

\begin{proof}
Fix an $F$-curve $B$ (an irreducible component of the locus in $\overline{\mathcal{M}}_{0,n}$ with at least $n-4$ nodes) and consider the preimage under $\varphi: \overline{\mathcal{M}}_{0,n+(p-2)}\longrightarrow \overline{\mathcal{M}}_{0,n}$ that forgets the last $p-2$ points where $p=$poles$(\kappa)$. Each irreducible component $\widetilde{B}_i$ for $i=1,\dots, (n-3)^{p-2}$ is a boundary strata of $\overline{\mathcal{M}}_{0,n+(p-2)}$ of dimension $p-1$.

Lemma~\ref{rest} implies dim$(\overline{\ZZZ}(\kappa)\cap \widetilde{B}_i)=0$. Hence as $Z_\kappa^n=\varphi_*\overline{\ZZZ}(\kappa)$, the irreducible curve $B$ is not contained in $Z_\kappa^n$ which implies $B\cdot Z_\kappa^n\geq 0$.
\end{proof}

This theorem covers a large class of divisors. In the cases that we have computed the class of the divisors, as discussed, the classes correspond to classes shown by Fedorchuk~\cite{Fed} to be nef. However we include the inductive argument that the divisors are nef as it may be instructive for the remaining open cases where the classes are unkown. Setting $p=$poles$(\kappa)$ we obtain the following.

\Nef*
\begin{proof}
Fix a boundary divisor $\delta_{0:S}$. Consider the restriction of $Z_\kappa^n$ to $\delta_{0:S}$. Recall $Z_\kappa^n$ is the cycle $\frac{1}{(p-2)!}\varphi_*\overline{\ZZZ}(\kappa)$ where
\begin{equation*}
\varphi:\overline{\mathcal{M}}_{0,n+(p-2)}\longrightarrow \overline{\mathcal{M}}_{0,n}
\end{equation*}
forgets the last $p-2$ points. The preimage of $\delta_{0:S}$ under $\varphi$ has $2^{p-2}$ irreducible components given by $\delta_{0:S\cup T}$ for $T\subset \{n+1,\dots,n+(p-2)\}$. 

We have
\begin{equation*}
\delta_{0:S\cup T}\cong \overline{\mathcal{M}}_{0,s+t+1}\times  \overline{\mathcal{M}}_{0,n+p-(s+t+1)}
\end{equation*}
where $|S|=s$ and $|T|=t$ and we let $\pi_1$ and $\pi_2$ be the projections to the first and second components respectively. Then 
\begin{equation*}
\delta_{0:S\cup T}\cap \overline{\ZZZ}(\kappa)=\overline{\ZZZ}(\kappa')\times\overline{\ZZZ}(\kappa'')
\end{equation*}
for
\begin{eqnarray*}
\kappa'&=&(n_{S,t},\underline{d}(S),1^t)\\
\kappa''&=&(-2-n_{S,t},\underline{d}(S^c),1^{p-2-t})
\end{eqnarray*}
where $\underline{d}(S)$ is the truncation of the vector $\underline{d}=(d_1,\dots,d_n)$ to include only indices from $S$ and 
\begin{equation*}
n_{S,t}=-2-t-\sum_{i\in S}d_i
\end{equation*}
By Theorem~\ref{existence} this intersection is empty for $n_{S,t}=-1$. Now by symmetry we need only consider $n_{S,t}\geq 0$. In this case $\overline{\ZZZ}(\underline{d}(S),1^t,n_{S,t})$ has codimension $s^--1$ where $s^-=\{i\in S|d_i<0\}$ while $\overline{\ZZZ}(\underline{d}(S^c),1^{p-2-t},-2-n_{S,t})$ has codimension $p-s^--1$. Hence 
\begin{equation*}
\varphi_*[\delta_{0:S\cup T}\cap \overline{\ZZZ}(\kappa)]=0
\end{equation*}
due to each irreducible component dropping dimension when $t\ne s^--1$ or $s^--2$. Hence
\begin{equation}\label{restriction}
Z_\kappa^n\big|_{\delta_{0:S}}=\pi_1^*{Z^{s+1}_{\kappa'}} +  \pi_2^*Z^{n-s+1}_{\kappa''},
\end{equation}
for
\begin{eqnarray*}
\kappa'&=&(n_{S,s^--2},\underline{d}(S),1^{s^--2})\\
\kappa''&=&(-n_{S,s^--1}-2,\underline{d}(S^c),1^{s^--1}).
\end{eqnarray*}
We now proceed by induction. The Theorem is true by Theorem~\ref{Fnef} for $n\leq 7$ as the $F$-conjecture has been proven in this range. Assume true for all $\overline{\mathcal{M}}_{0,n}$ for $n\leq k$. Then for any irreducible effective curve $B$ contained in the boundary of $\overline{\mathcal{M}}_{0,k+1}$ we have 
\begin{equation*}
B\cdot Z^{k+1}_{\kappa}\geq 0
\end{equation*}
for $\kappa=(d_1,\dots,d_{k+1},1^{p-2})$ by Equation~(\ref{restriction}) and the assumption for $n\leq k$. Hence we are left to consider irreducible effective curves $B$ in $\overline{\mathcal{M}}_{0,k+1}$ intersecting the interior $\mathcal{M}_{0,k+1}$. Such $B$ have non-negative intersection with every boundary divisor and hence
\begin{equation*}
B\cdot Z^{k+1}_{\kappa}\geq 0
\end{equation*}
by Theorem~\ref{thm:ZRdivI} as $Z^{k+1}_{\kappa}$ is boundary effective.
\end{proof}

\begin{rem}

\end{rem}


\section{Irreducibility of certain Hurwitz Spaces}
Consider a degree $d$ branched cover $f:X\longrightarrow Y\cong \PP^1$, branched over $y_1,\dots,y_s$. Choose a point $y\in Y$ outside of this branch locus and label the preimages of $y$ under $f$ as $1,\dots, d$. The \emph{monodromy representation} of the cover $f$ is the group homomorphism
\begin{eqnarray*}\begin{array}{lclll}
\varphi_{f}:&\pi_1(Y\setminus\{y_1,\dots,y_s\},y)&\longrightarrow&S_d\\
&\gamma&\mapsto&\sigma_{\gamma}
\end{array}
\end{eqnarray*}
where $\sigma_{\gamma}$ specifies how the lift of $\gamma$ to $X\setminus\{f^{-1}(y_i)\}_{i=1}^s$ permutes the labelled preimages of $y$. The monodromy representation $\varphi_f$ is well-defined up to conjugation (the choice of labelling the sheets).

Conversely, a group homomorphism
$$\varphi: \pi_1(Y\setminus\{y_1,\dots,y_s\},y)\longrightarrow S_d$$
recovers the cover $f:X\longrightarrow Y$ by the Riemann existence theorem. To restrict to connected degree $d$ covers, we assume that in addition the image of the monodromy representation acts transitively on $\{1,\dots,d\}$.

Consider a basis $\{\gamma_1,\dots,\gamma_s\}$ for $ \pi_1(Y\setminus\{y_1,\dots,y_s\},y)$ consisting of closed loops $\gamma_i$ enclosing only the puncture $y_i$ where $y_1,\dots,y_s$ are in general position. Let 
$$\underline{\eta}=[\eta_1,\dots,\eta_s]$$
be an unordered set of partitions of the integer $d$. The number of conjugacy classes of monodromy representations of $\varphi$ with $\varphi(\gamma_i)$ of cycle type $\eta_i$ is known as \emph{the Hurwitz number of} $\underline{\eta}$, denoted $H_{\underline{\eta}}$, gives the number of distinct covers $f:X\longrightarrow Y\cong\PP^1$ with fixed branch points with ramification profile $\underline{\eta}$.

We define the Hurwitz space of type $\underline{\eta}$ as the space of all covers with a specified branch profile.
$$   \Hur(\underline{\eta}):=\{\hspace{0.2cm}f:C\longrightarrow \PP^1\hspace{0.2cm}\big|\hspace{0.2cm}\text{The ramification of $f$ is of type $\underline{\eta}$}\}.$$                                
The Hurwitz number $H_{\underline{\eta}}$ provides an upper bound for the number of connected components of $\Hur(\underline{\eta})$, however, covers with the same branch points giving different conjugacy classes of monodromy representations can be connected by paths in $\Hur(\underline{\eta})$ obtained by moving the branch points. In this way, the question of the irreducibility of Hurwitz spaces can be viewed as a purely group theoretic question. For example, from this perspective Hurwitz observed that a group theoretic result of Clebsch~\cite{Clebsch} shows all conjugacy classes of monodromy representations of degree $d$ covers of $\PP^1$ where $\underline{\eta}$ contains only transpositions are equivalent under the action of the braid group that moves the branch points. Hence these spaces are irreducible.

Further, as the map $\Hur(\underline{\eta})\longrightarrow \Mg$ forgetting everything but the source curve is dominant in these cases for $d\geq g+1$, Klein~\cite{Klein} noted that this provided a proof that $\Mg$ is irreducible.

Kluitmann~\cite{Kluitmann} and Natanzon~\cite{Natanzon} used similar methods to show that the Hurwitz spaces with arbitrary genus source curve and simple branching at all but one branch point are connected. Liu and Osserman~\cite{LO} used limit linear series to show the irreducibility of Hurwitz spaces with genus $g=0$ source curve and pure ramification (the preimage of each branch point contains a unique ramified point, i.e. each partition $\eta_i$ contains a unique value not equal to one). We extend these results and consider cases where ramification is pure at all but one branch point and the source curve is of arbitrary genus. Our method relies on the identification of these type of Hurwitz spaces with subvarieties of the strata of meromorphic differentials.

We define the \emph{stratum of exact abelian differentials with signature $\kappa=(k_1,...,k_m)$} as
\begin{equation*}
\HH_X(\kappa):=\{[C,\omega]\in \HH(\kappa)   \hspace{0.15cm}| \hspace{0.15cm}\int_\gamma\omega=0 \text{ for any closed path $\gamma$ in $C$}\},
\end{equation*}
and the \emph{stratum of exact canonical divisors with signature $\kappa=(k_1,...,k_m)$} as
\begin{equation*}
\X(\kappa):=\{[C,p_1,...,p_m]\in {\mathcal{M}}_{g,m}   \hspace{0.15cm}| \hspace{0.15cm}k_1p_1+...+k_mp_m\sim(\omega)\sim K_C \text{ and $[C,\omega]\in \HH_X(\kappa)$}\}.
\end{equation*}
Again, $\X(\kappa)$ provides a finite cover of $\PP\HH_X(\kappa)$ due to the labelling of the zeros and poles. First we observe the following identification.

\begin{lem}
If $\kappa=(-p_1,\dots,-p_s,a_1,\dots,a_n)$ with $p_i\geq2$, $a_i\geq1$ and either at least two $p_i\geq 3$ or some $p_j\geq 3$ with $p_j\ne a_i+2$ for any $i$, then
\begin{equation*}
\PP\HH_X(\kappa)\cong \Hur(\underline{\eta}_\kappa)
\end{equation*}
where
\begin{equation*}
\underline{\eta}_\kappa=((p_1-1)\cdots(p_s-1),(a_1+1),\dots,(a_n+1)).
\end{equation*}
\end{lem}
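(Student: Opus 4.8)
The plan is to produce explicit mutually inverse holomorphic maps between $\PP\HH_X(\kappa)$ and $\Hur(\underline{\eta}_\kappa)$, exploiting the fact that an exact differential is globally the pullback of $dz$ under its own integral. First I would build the map $\PP\HH_X(\kappa)\to\Hur(\underline{\eta}_\kappa)$. Given $[C,\omega]\in\HH_X(\kappa)$, the vanishing of every period (both absolute periods and the residues at the poles) means that $f:=\int\omega$ is a single-valued meromorphic function $f\colon C\to\PP^1$ with $\omega=f^\ast(dz)$. The ramification of $f$ can then be read off from the local forms of $\omega$: near a zero of order $a_j$ we have $f\sim u^{a_j+1}$ up to an affine change, so $f$ has ramification index $a_j+1$ over a finite value; near a pole of order $p_i$ the vanishing residue forbids a logarithmic term, so $f$ has a genuine pole of order $p_i-1$, giving ramification index $p_i-1$ over $\infty$. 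In particular all $s$ poles of $\omega$ lie over the single point $\infty$, the branch profile there is $(p_1-1,\dots,p_s-1)$, the remaining branch points are the pure profiles $(a_j+1)$, and $\deg f=\sum_i(p_i-1)=d$. Thus $f$ is exactly a cover of type $\underline{\eta}_\kappa$.

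For the inverse I would start from $[f]\in\Hur(\underline{\eta}_\kappa)$ and set $\omega:=f^\ast(dz)$; since $dz$ is exact away from $\infty$, its pullback lies in $\HH_X$, and the ramification-to-order dictionary above, run backwards, shows $\omega$ has signature exactly $\kappa$. The one point requiring care --- and the main obstacle --- is that this construction needs to know which branch point of $f$ is to be placed at $\infty$ (the one whose fibre becomes the poles of $\omega$), and this must be determined canonically by the cover. This is precisely what the hypotheses on $\kappa$ guarantee: the distinguished profile $(p_1-1,\dots,p_s-1)$ must differ, as a partition of $d$, from every pure profile $(a_j+1,1,\dots,1)$. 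If at least two $p_i\geq 3$, then $(p_1-1,\dots,p_s-1)$ has at least two parts exceeding $1$ and so is not pure; if instead exactly one $p_j\geq 3$, the profile is pure with single nontrivial part $p_j-1$, and the requirement $p_j\neq a_i+2$ for all $i$ is exactly the condition $p_j-1\neq a_i+1$ that this part match no pure branch point. In either case the branch point over $\infty$ is the unique one of its type, the cover singles it out canonically, and the inverse is well defined; conversely, when the profiles coincide the reverse assignment is genuinely ambiguous, which is why the hypotheses cannot be dropped.

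Finally I would verify the two constructions are mutually inverse, which is essentially formal. Integrating $f^\ast(dz)$ recovers $f$, and pulling back $dz$ along $\int\omega$ recovers $\omega$; the only slack is that $\omega=\int f^\ast(dz)$ is defined up to an additive constant of integration together with the scaling quotiented in $\PP\HH_X$, i.e. up to post-composing $f$ by an affine automorphism of the target fixing $\infty$. Placing the distinguished branch point at $\infty$ absorbs the remaining freedom, so this slack matches exactly the target-automorphism ambiguity built into $\Hur$. Matching the residual discrete data (the unordered labelling of equal zeros and of equal poles of $\omega$ corresponds to the unordered branch points of equal profile in $\underline{\eta}_\kappa$) then yields the asserted isomorphism $\PP\HH_X(\kappa)\cong\Hur(\underline{\eta}_\kappa)$.
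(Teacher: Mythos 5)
Your proof is correct and takes essentially the same route as the paper: the paper's (very terse) proof defines the two maps exactly as you do, by integration in one direction and by pulling back the differential on $\PP^1$ with a unique double pole at the distinguished branch point in the other. The role you assign to the hypotheses on the $p_i$ — forcing the profile $(p_1-1,\dots,p_s-1)$ to differ from every pure profile so that the special fibre is canonically identified — is precisely the content of the remark the paper places immediately after the lemma, so you have simply folded that justification into the proof itself.
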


\begin{proof}
Given a cover $f:C\longrightarrow \PP^1$ of the specified type, the differential on $C$ is obtained by pulling back a differential on $\PP^1$ with a unique double pole at the first branch point. Given $[C,\omega]\in\HH_X(\kappa)$, the map $f$ is obtained by integration.
\end{proof}

\begin{rem}Observe that the extra requirements on the $p_i$ are so that the special fibre can be identified globally. In general, if some $p_i\geq 3$, then there is a finite cover 
$$\PP\HH_X(\kappa)\longrightarrow \Hur(\eta_{\kappa})$$
due to the possible choice in where to place the double pole on the target curve before pulling back. Further, this identification is always valid locally and hence the degree of $f$ places a restriction on the maximum order of ramification possible. This restriction shows $\X(\kappa)$ and $\HH_X(\kappa)$ are empty if any 
$$ a_i\geq \sum_{i=1}^s(p_i-1).$$
In genus $g=0$ where $\HH_X(\kappa)=\HH_Z(\kappa)$ this is commonly referred to as the "inconvenient vertex" theorem named for the ideas of the flat geometric proof~\cite{GendronTahar}. 
\end{rem}

In genus $g=0$ where $\HH_X(\kappa)=\HH_Z(\kappa)$ this identification and the class given in Theorem~\ref{thm:ZRdivI} allows us to give a new geometric description of the divisor classes introduced by Fedorchuk~\cite[Equation (1.2.1)]{Fed} with $G=\ZZ$ and $f(z)=|z/2|$. We record this as the following Corollary.
\begin{restatable}{cor}{DivCor}\label{DivCor}
Let $\underline{f}=(f_1,\dots,f_n)$ for $f_i\in \ZZ$ and $\sum_{i=1}^n f_i=0$. Let $D_{\underline{f}}$ be the closure of the locus of pointed rational curves $[X,p_1,\dots,p_n]\in\mathcal{M}_{0,n}$ such that there exists a map $f:X\longrightarrow \PP^1$ with ramification order $f_i$ at $p_i$ if $f_i>0$ and one fibre consisting of the $p_i$ such that $f_i<0$ where the ramification at such $p_i$ is $|f_i|-1$. Then the class of $D_{\underline{f}}$ is
$$[D_{\underline{f}}]=\sum_{i=1}^nf(i)\psi_i-\sum_{1\in S}f(S)\delta_{0:S}$$
in $\Pic_\QQ(\overline{\mathcal{M}}_{0,n})$, where $f$ is the symmetric function
$$f(S):=\frac{1}{2}\big| \sum_{i\in S}f_i \big|$$
for $S\subset \{1,\dots,n\}$.
\end{restatable}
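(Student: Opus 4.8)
The plan is to recognise $D_{\underline{f}}$ as one of the genus-zero divisors $Z^n_{\underline{d},1^{m-2}}$ whose class is computed in Theorem~\ref{thm:ZRdivI}, and then to convert the symmetric function appearing there into the one in the statement. First I would set up a dictionary between $\underline{f}$ and a signature $\underline{d}=(d_1,\dots,d_n)$: put $d_i=f_i$ when $f_i>0$ and $d_i=f_i-1$ when $f_i<0$ (points with $f_i=0$ are unramified and may be removed by pulling back along a forgetful map). Writing $m$ for the number of negative $f_i$, the normalisation $\sum_i f_i=0$ gives $\sum_i d_i=\sum_{f_i>0}f_i+\sum_{f_i<0}(f_i-1)=-m$, so that $\kappa=(\underline{d},1^{m-2})$ is a genuine genus-zero signature with $-1\nin\underline{d}$ and exactly $m$ poles.

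Next I would identify the two loci. Since $g=0$ carries no absolute periods and $\kappa$ has no simple poles, the zero-residue condition at the (necessarily non-simple) poles forces every period to vanish; thus $\HH_Z(\kappa)=\HH_X(\kappa)$ and a differential on a smooth rational curve lying in $\overline{\ZZZ}(\kappa)$ is exact, i.e. $\omega=df$ for a rational map $f:X\longrightarrow\PP^1$. Under $\omega=df$ a marked point with $f_i>0$ is a zero of $df$ of order $f_i$, hence a ramification point of $f$ of ramification order $f_i$, while the points with $f_i<0$ are precisely the fibre of $f$ over a single value, at which $df$ has a pole of order $|f_i|+1$ and $f$ has ramification order $|f_i|-1$; this is exactly the description of $D_{\underline{f}}$. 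A degree count locates the remaining zeros of $df$: the map has degree $d=\sum_{f_i>0}f_i$, its polar divisor has degree $\sum_{f_i<0}(|f_i|+1)=d+m$, and since $\deg(df)=-2$ the differential has $d+m-2$ zeros, of which the marked ones account for $\sum_{f_i>0}f_i=d$. Hence exactly $m-2$ further, generically simple, zeros occur at unmarked points, and $D_{\underline{f}}$ is the reduced image of $\overline{\ZZZ}(\kappa)$ under the map forgetting these $m-2$ simple zeros; that is, $D_{\underline{f}}=Z^n_{\underline{d},1^{m-2}}$ in the sense of Definition~\ref{divdefn}.

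Granting this identification, the class computation is immediate: Theorem~\ref{thm:ZRdivI} gives $Z^n_{\underline{d},1^{m-2}}=\sum_{j}f(j)\psi_j-\sum_{1\in S}f(S)\delta_{0:S}$ with $f(S)=\frac{1}{2}\big|\sum_{i\in S}d_i+|S^-|\big|$. It then remains only to observe the elementary identity $\sum_{i\in S}d_i+|S^-|=\sum_{i\in S}f_i$, which holds because each zero in $S$ contributes $d_i=f_i$ and each pole contributes $d_i+1=f_i$; substituting yields precisely $f(S)=\frac{1}{2}\big|\sum_{i\in S}f_i\big|$ and completes the proof.

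I expect the main obstacle to be the geometric identification of the second paragraph rather than the formal class computation. The delicate points are matching the ramification conventions so that orders and signs line up (equivalently, checking that the $\sum_i f_i=0$ normalisation is exactly what makes $\underline{d}$ a valid signature), running the degree count to see that precisely $m-2$ unmarked simple zeros appear so that the correct forgetful map and symmetry factor $1/(m-2)!$ from Definition~\ref{divdefn} are used, and verifying that the pushforward is reduced of multiplicity one so that $D_{\underline{f}}$ and $Z^n_{\underline{d},1^{m-2}}$ coincide as divisors rather than up to a scalar. One should also dispose of the degenerate cases ($m=2$, where nothing is forgotten, and any $f_i=\pm1$) and appeal to Proposition~\ref{existence} to guarantee the relevant strata are non-empty.
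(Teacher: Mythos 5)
Your proposal is correct and follows essentially the same route as the paper: the paper derives Corollary~\ref{DivCor} (without a separate written proof) from exactly the identification you describe --- in genus $0$ one has $\HH_X(\kappa)=\HH_Z(\kappa)$, so integrating/differentiating matches $D_{\underline{f}}$ with $Z^n_{\underline{d},1^{m-2}}$ under the dictionary $d_i=f_i$ for $f_i>0$, $d_i=f_i-1$ for $f_i<0$ --- combined with the genus-$0$ class formula of Theorem~\ref{thm:ZRdivI}, whose proof already uses the inverse substitution $f_i=d_i+1$ for $d_i<0$ that you invert to get $f(S)=\tfrac{1}{2}\bigl|\sum_{i\in S}f_i\bigr|$. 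Your bookkeeping (the zero-sum normalisation, the count of $m-2$ unmarked simple zeros, and the symmetry factor $1/(m-2)!$ from Definition~\ref{divdefn}) matches the paper's conventions.
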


Zariski asked if Hurwitz spaces of genus $g$, degree $d$ covers of $\PP^1$ with specified branching over at least $3g$ points were dominant over $\Mg$. This does not hold in general. Liu and Osserman~\cite{LO} asked an altered question about purely ramified Hurwitz spaces that we phrase in our notation.

\begin{question}
Fix $\kappa=(a_1,\dots,a_n,-p,-2^{d-(p-1)},1^{3g})$ with $p\geq 3$ and $a_i\geq 1$ and let
$$\phi:\X(\kappa)\longrightarrow  \mathcal{M}_{g,n+1}$$
be the morphism forgetting all but the first $n+1$ points. Is $\phi$ dominant on every component of $\X(\kappa)$?
\end{question}

Liu and Osserman used limit linear series to show that a positive answer to this question in all cases implied the irreducibility of Hurwitz spaces with arbitrary genus $g$ source curve and pure ramification if there were at least $3g$ simple branch points. We take no position on this question, but related questions arise as we investigate the case with pure branching at all but one branch point.

\begin{lem}\label{dom}
Let $\kappa=(-p_1,\dots,-p_s,a_1,\dots,a_n, 1^{3g+s-1})$ with $p_i\geq 2$ and $a_i\geq 1$, then 
$$\phi:\X(\kappa)\longrightarrow \mathcal{M}_{g,s+n}$$
that forgets all but the first $s+n$ points is dominant.
\end{lem}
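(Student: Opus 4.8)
The plan is to prove dominance by showing that every sufficiently general point of $\mathcal{M}_{g,s+n}$ lies in the image of $\phi$; since the image of a morphism of varieties is constructible, exhibiting a preimage for all points outside a proper closed subset forces the image to be dense. Concretely, given a general $[C,x_1,\dots,x_s,y_1,\dots,y_n]\in\mathcal{M}_{g,s+n}$ --- where the $x_i$ are to become the order $p_i$ poles and the $y_j$ the order $a_j$ zeros --- I will produce an exact differential $\omega$ of signature $\kappa$ on $C$ realising this data, together with $3g+s-1$ further simple zeros; any labelling of those simple zeros then gives a point of $\X(\kappa)$ lying over the chosen point. As a consistency check, the period coordinate description of \S\ref{charts} together with the linear exactness conditions (compare Proposition~\ref{affinemanifold}, now imposing vanishing of all $2g+s-1$ absolute periods) gives $\dim\X(\kappa)=3g-3+s+n=\dim\mathcal{M}_{g,s+n}$, so dominance is equivalent to generic finiteness of $\phi$ and the target point should have only finitely many preimages.

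The key reformulation is that exact differentials on $C$ whose poles are bounded by $\sum_i p_i x_i$ are exactly the differentials $df$ with $f\in H^0\!\big(C,\mathcal{O}_C(\textstyle\sum_i(p_i-1)x_i)\big)$: every such $df$ automatically has zero residues and zero absolute periods, and conversely an exact differential integrates to such an $f$. Writing $D=\sum_i(p_i-1)x_i$, of degree $d=\sum_i(p_i-1)$, these differentials form, modulo the constants $f\mapsto f+c$, a vector space of dimension $h^0(D)-1\ge d-g$ by Riemann--Roch. Requiring $df$ to vanish to order at least $a_j$ at $y_j$ imposes $a_j$ linear conditions for each $j$, hence $\sum_j a_j=d-g-1$ conditions in total (using $\sum_j a_j=d-g-1$, which follows from $\sum\kappa=2g-2$). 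Thus the space of admissible exact differentials has dimension at least $(d-g)-(d-g-1)=1$, so a nonzero candidate $\omega=df$, with poles at most $p_i x_i$ and zeros at least $a_j y_j$, exists for \emph{every} choice of distinct points on any $C$.

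It remains to upgrade this candidate to an honest element of $\X(\kappa)$ for a general configuration: its pole at $x_i$ has order exactly $p_i$ (the leading Laurent coefficient of $f$ at $x_i$ is nonzero), its zero at $y_j$ has order exactly $a_j$, and its remaining zeros --- whose total multiplicity is $3g+s-1$, as forced by $\deg(\omega)=2g-2$ together with the prescribed poles and zeros --- are simple and disjoint from the $x_i$ and $y_j$. Each failure of these requirements is a closed condition on $(C,x_i,y_j)$, so it suffices to exhibit a single configuration for which $\omega$ is non-degenerate, after which generality follows by upper semicontinuity. Granting this, the resulting marked curve maps to $[C,x_1,\dots,x_s,y_1,\dots,y_n]$ under $\phi$, the image contains a dense open subset, and $\phi$ is dominant.

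The main obstacle is exactly this genericity step: the Riemann--Roch count hands over a candidate $\omega$ effortlessly and unconditionally in the position of the points, but verifying that the candidate attains the \emph{exact} prescribed orders --- rather than acquiring an unexpected extra zero or a reduced pole --- requires controlling the linear system $|D|$ and the vanishing conditions simultaneously. I would dispatch it by the flat-geometric construction of \S\ref{charts}, which builds differentials lying in $\HH(\kappa)$ with the exact cone angles built in and then cuts out $\HH_X(\kappa)$ by the real-linear absolute-period equations, yielding honest elements of $\X(\kappa)$ over an open set of configurations; alternatively one can specialise the points to a hyperelliptic or nodal model where $\omega$ can be written down and its orders read off. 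The companion finiteness of the general fibre, needed for the dimension equality to be meaningful, reduces to non-specialness of $D$ for general $x_i$, which holds because $d=g+1+\sum_j a_j>g$ forces $h^1(D)=h^0(K_C-D)=0$ (no nonzero holomorphic differential vanishes to order $p_i-1$ at $g<d$ general points).
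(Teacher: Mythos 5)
Your Riemann--Roch reformulation is sound, and it is a genuinely different route from the paper's: the paper proves Lemma~\ref{dom} by induction on the genus, using in the base case $g=0$ the explicit class of the divisor $D^{s+n+1}_\kappa$ from Theorem~\ref{thm:ZRdivI} and a test-curve computation to show this divisor is not contracted by the map forgetting the last point, and in the inductive step a degeneration to curves with elliptic tails (gluing in $\X(-3,1^3)$ via admissible covers). Your identification of exact differentials with poles bounded by $\sum p_ix_i$ as $df$ for $f\in H^0\big(C,\mathcal{O}_C(\sum(p_i-1)x_i)\big)$, and the count $(d-g)-(d-g-1)=1$ producing a nonzero candidate over \emph{every} configuration, are correct. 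The genuine gap is the genericity step, and neither of your patches closes it. The claim that ``each failure of these requirements is a closed condition on $(C,x_i,y_j)$'' is not justified: non-degeneracy is a condition on the \emph{pair} (configuration, candidate), and what you need is that the locus of configurations \emph{all} of whose candidates are degenerate is closed. That locus is the complement of the image of an open subset of the incidence variety of candidates, hence a priori only constructible; it would be closed if you knew the candidate space were generically one-dimensional, but that is an independence-of-conditions statement of essentially the same difficulty as the lemma itself (it amounts to the generic finiteness of $\phi$ that you invoke). Moreover, even granting semicontinuity, you never exhibit the single non-degenerate configuration: both suggested constructions are deferred, not carried out.

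The flat-geometric patch in particular is circular. The charts of Section~\ref{charts} together with the period-coordinate count (as in Proposition~\ref{affinemanifold}) give, assuming nonemptiness, that $\X(\kappa)$ is smooth of dimension $3g-3+s+n=\dim\mathcal{M}_{g,s+n}$; they say nothing about the image of $\phi$. Equality of dimensions does not exclude that $\phi$ \emph{contracts} $\X(\kappa)$, i.e.\ has positive-dimensional fibres over a lower-dimensional image --- ruling out exactly this contraction is the content of the lemma, is what the paper's test-curve computation does in genus $0$, and is the phenomenon the paper must fight again in Lemma~\ref{onecomp} and the irreducibility theorems. So asserting that flat geometry yields elements of $\X(\kappa)$ ``over an open set of configurations'' assumes the conclusion. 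The good news is that your approach can be completed without exhibiting any explicit configuration: a degenerate candidate over $(C,x_i,y_j)$ places that configuration in the image of $\X(\kappa')$ (together with a free point for each $x_i$ at which the pole order has dropped to $0$) for one of the finitely many strictly degenerate signatures $\kappa'$ obtained by lowering pole orders, raising the orders at the $y_j$, or letting the extra zeros collide; the same period-coordinate dimension formula shows each such image has dimension strictly less than $3g-3+s+n$. Since by your count every configuration carries some candidate, $\mathcal{M}_{g,s+n}$ is covered by $\overline{\phi(\X(\kappa))}$ together with these finitely many proper closed subsets, forcing $\overline{\phi(\X(\kappa))}=\mathcal{M}_{g,s+n}$. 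As written, however, the proposal stops short of any such argument, and dominance is not established.
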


\begin{proof}
We proceed by induction. Consider the genus $g=0$ case where 
$$\kappa=(-p_1,\dots,-p_s,a_1,\dots,a_n, 1^{s-1}).$$
Let 
$$\phi':\overline{\mathcal{M}}_{0,2s+n-1}\longrightarrow \overline{\mathcal{M}}_{0,s+n+1}$$
be the morphism forgetting the final $s-2$ points. We know $\phi'_*\overline{\X}(\kappa)$ is codimension one in $\overline{\mathcal{M}}_{0,s+n+1}$ as the restriction to the boundary of strata in higher genus discussed in Section~\ref{class}. If $\X(\kappa)$ is contracted by $\phi$ then the divisor $D^{s+n+1}_\kappa$ must be contracted by 
$$\pi:\overline{\mathcal{M}}_{0,s+n+1}\longrightarrow \overline{\mathcal{M}}_{0,s+n}$$
that forgets the final point. But Theorem~\ref{thm:ZRdivI} gives the class of this divisor. Consider the curve $B$ in $\overline{\mathcal{M}}_{0,s+n+1}$ constructed by fixing the first $s+n$ points in general position on a rational curve and allowing the final point to move freely on the curve. We have (see for example \cite{HarrisMorrison})
$$ B\cdot \psi_i=\begin{cases}n+s-2&\text{for $i=s+n+1$}\\1&\text{for $i\ne s+n+1$}  \end{cases} \hspace{0,7cm}B\cdot \delta_{j,s+n+1}=1$$ 
with all other intersections equal to zero. As $B$ is equal to a general fibre of $\pi$, any divisor that is contracted by $\pi$ must have zero intersection with $B$. But
$$B\cdot [D^{s+n+1}_\kappa]=2s-2$$
by the class formula in Theorem~\ref{thm:ZRdivI}. Hence $\phi$ is dominant in genus $g=0$.

Now assume the result holds in genus $g-1$. In particular, assume the morphism
$$\phi'':\X(-p_1,\dots,-p_s,a_1,\dots,a_n,1^{3(g-1)+s})\longrightarrow \mathcal{M}_{g-1,s+n+1}$$
forgetting the last $3(g-1)+s-1$ points is dominant. Consider the locus
$$W:=\overline{\X}(-p_1,\dots,-p_s,a_1,\dots,a_n,1^{3(g-1)+s})\cup \overline{\X}(-3,1^3)$$
obtained by gluing the $(n+s+1)$th point in the first stratum to the first point in the second stratum. The theory of admissible covers shows this loci is contained $\overline{\X}(\kappa)$. However ${\X}(-3,1^3)$ is simply the locus of elliptic curves with the four two torsion points marked. Hence 
$$\overline{\X}(-3,1^3)\longrightarrow \overline{\mathcal{M}}_{1,1}$$
is dominant and hence $W$ dominates $\delta_{1:\emptyset}$ the locus of stable curves with an unmarked elliptic tail. Hence if $\phi''$ is dominant then $\phi$ is dominant and induction provides the result.
\end{proof}

\begin{lem}\label{onecomp}
Let $\kappa=(-p_1,\dots,-p_s,a_1,\dots,a_n, 1^{3g+s-1})$ with $p_i\geq 2$ and $a_i\geq 1$, then exactly one component of $\X(\kappa)/S_{3g+s-1}$ where $S_{3g+s-1}$ permutes the markings on the final $3g+s-1$ points is dominant under
$$\phi:\X(\kappa)/S_{3g+s-1}\longrightarrow \mathcal{M}_{g,s+n}$$
that forgets all but the first $s+n$ points and $\phi$ has generic degree one.
\end{lem}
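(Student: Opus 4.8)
The plan is to compute the generic fibre of $\phi$ directly: I will show it consists of a single reduced point, after which both assertions follow formally. Fix a general $[C,p_1,\dots,p_{s+n}]\in\mathcal{M}_{g,s+n}$ in the image of $\phi$, which is dense by Lemma~\ref{dom}. A point of $\X(\kappa)/S_{3g+s-1}$ lying over it is an unordered collection of $3g+s-1$ further points of $C$ completing the configuration to a point of $\X(\kappa)$; since a canonical divisor determines its differential up to scale, such a point is the same as a choice, up to scale, of an exact differential $\omega$ on $C$ whose divisor has a pole of order exactly $p_i$ at $p_i$ for $i\le s$, a zero of order exactly $a_i$ at $p_{s+i}$ for $i\le n$, and simple zeros elsewhere, the forgotten points being recovered as these simple zeros. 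The quotient by $S_{3g+s-1}$ is precisely what discards the ordering of the simple zeros, so the fibre is identified with (an open subset of) the space of such differentials up to scale, rather than a $(3g+s-1)!$-fold cover of it.

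The crucial structural observation is that this space is a projective linear space. The condition that $\omega$ have poles bounded by the $p_i$ and vanish to order at least $a_i$ at $p_{s+i}$ defines a linear subspace $W\subset H^0(C,K_C+D)$, where $D=\sum_{i\le s}p_i\,p_i-\sum_{i\le n}a_i\,p_{s+i}$; and exactness, being the vanishing of $\int_\gamma\omega$ over a basis of $H_1(C\setminus\{p_1,\dots,p_s\};\ZZ)$, is a further system of linear conditions. Hence the fibre is an open subset of $\PP(\ker)$, the projectivised kernel of the absolute period map on $W$. Riemann--Roch gives $\dim W=2g+s$, while the period target $H^1(C\setminus\{p_1,\dots,p_s\};\CC)$ has dimension $2g+s-1$; the same count, carried out in the Boissy period coordinates of Proposition~\ref{affinemanifold}, shows
\[
\dim\X(\kappa)=3g+s+n-3=\dim\mathcal{M}_{g,s+n}.
\]
Granting this together with the dominance of $\phi$, the morphism is generically finite, so for general $[C,p_1,\dots,p_{s+n}]$ the linear space $\PP(\ker)$ is zero-dimensional. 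A zero-dimensional projective linear space is a single reduced point; thus $\ker$ is one-dimensional, the fibre is one point, and in characteristic zero the generic degree of $\phi$ is $1$. Linearity is what rules out any Hurwitz-number-type multiplicity: once the fibre is finite it is forced to be a single point.

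That exactly one component is dominant is then formal. Let $z$ be the unique preimage of a general $x\in\mathcal{M}_{g,s+n}$. Every component of $\X(\kappa)/S_{3g+s-1}$ dominating $\mathcal{M}_{g,s+n}$ must meet $\phi^{-1}(x)=\{z\}$ for general $x$, hence must contain $z$; if two distinct components did so, their intersection would contain the image of the rational section $x\mapsto z$ and so have dimension at least $\dim\mathcal{M}_{g,s+n}=\dim\X(\kappa)$, which is impossible for distinct components. Since Lemma~\ref{dom} supplies at least one dominant component, there is exactly one.

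I expect the main obstacle to be the transversality underlying the displayed dimension equality, equivalently the surjectivity of the absolute period map restricted to $W$ for a general curve with general marked points. The classical period map from all differentials of the second kind with poles in $\{p_1,\dots,p_s\}$ onto $H^1(C\setminus\{p_1,\dots,p_s\};\CC)$ is surjective; the content is that imposing the bounded poles and the forced zeros at the $p_{s+i}$ still leaves a surjection. I would establish this as in Proposition~\ref{affinemanifold}, where the residue and period functionals are seen to be independent real-linear functionals in period coordinates, or alternatively by degenerating to a boundary point as in the proof of Lemma~\ref{dom}, where the count can be made explicit. Finally, the generic member of the dominant component has its remaining zeros simple and distinct, so the unique point of $\PP(\ker)$ genuinely lies in $\X(\kappa)$ and the fibre is reduced, completing the argument.
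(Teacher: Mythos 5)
Your argument is correct and is essentially the paper's own: both proofs rest on the linearity of the exactness and order conditions together with the count $\dim\X(\kappa)=3g+s+n-3=\dim\mathcal{M}_{g,s+n}$ and the dominance from Lemma~\ref{dom}, the paper simply running your argument in the contrapositive — two dominant components would give non-proportional exact differentials $\omega_1,\omega_2$ with the same prescribed zeros and poles over a common general point, and the pencil $\alpha\omega_1+\beta\omega_2$ (a line inside your $\PP(\ker)$) would then produce one-dimensional fibres over a dense open set, contradicting the dimension of $\X(\kappa)$. The transversality you flag as the main obstacle is not a genuine one: in the period coordinates of Section~\ref{charts}, exactness is the vanishing of the image of the period class under the surjective linear map $H^1(C\setminus\{p_1,\dots,p_s\},Z;\CC)\longrightarrow H^1(C\setminus\{p_1,\dots,p_s\};\CC)$ (relative cohomology with respect to the set $Z$ of zeros), so every component of $\HH_X(\kappa)$ has codimension exactly $2g+s-1$ in $\HH(\kappa)$, which yields precisely the dimension equality that both you and the paper's proof require.
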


\begin{proof}
Consider two dominant components. Their images are both open dense in $\mathcal{M}_{g,s+n}$ and hence intersect on an open dense. Then for every $[C,q_1,\dots,q_{n+s}]\in\mathcal{M}_{g,s+n}$ in this open dense there exist differentials $\omega_1$ and $\omega_2$ with common poles of order $p_i$ at $q_i$ for $i=1,\dots,s$ and common zeros of order $a_i$ at $q_{s+i}$ for $i=1,\dots n$. But then $\alpha\omega_1+\beta\omega_2$ for $[\alpha:\beta]\in \PP^1$ provides a one dimensional fibre of differentials above each point in an open dense of $\mathcal{M}_{g,s+n}$ contradicting the dimension of $\X(\kappa)$
\end{proof}

At this point we introduce some technical results that will be required in our later proofs of irreducibility.

\begin{lem}\label{nonpropgenuszero}
Let
$$\underline{d}=(-p_1,\dots,-p_s,a_1,\dots,a_m)$$
for $\sum_{i=1}^sp_i-\sum_{i=1}^ma_i=s-1$ and $n=s+m\geq 5$, $s,m\geq 2$, $p_i\geq 2$ and $a_i\geq 1$ and 
$$\underline{d}^i=\begin{cases} (-p_1,\dots,-p_i+1,\dots,-p_s,a_1,\dots,a_m)& \text{for $p_i\geq 3$}\\
(-p_1,\dots,-p_{i-1},0,-p_{i+1},\dots,-p_s,a_1,\dots,a_m)  &\text{for $p_i=2$}\end{cases}$$
for $i=1,\dots s$. The classes of divisors $Z^n_{\underline{d}^i}$ as specified in Theorem~\ref{thm:ZRdivI} are not all proportional.
\end{lem}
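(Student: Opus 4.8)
The plan is to prove non-proportionality by exhibiting two specific variation vectors $\underline{d}^i$ and $\underline{d}^j$ whose divisor classes $Z^n_{\underline{d}^i}$ and $Z^n_{\underline{d}^j}$ in $\Pic_\QQ(\overline{\mathcal{M}}_{0,n})$ fail to be scalar multiples of one another. Since by Theorem~\ref{thm:ZRdivI} each $Z^n_{\underline{d}^i}$ has the genus-zero form
\begin{equation*}
Z^n_{\underline{d}^i}=\sum_{k=1}^n f_i(k)\,\psi_k-\sum_{1\in S}f_i(S)\,\delta_{0:S},\qquad f_i(S)=\tfrac12\big|\textstyle\sum_{k\in S}\underline{d}^i_k+|S^-|\big|,
\end{equation*}
two such classes are proportional if and only if the coefficient vectors agree up to a common scalar on every generator. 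My strategy is to compare the $\psi$-coefficients $f_i(k)=\tfrac12|\underline{d}^i_k+[\underline{d}^i_k<0]|$, i.e. for $g=0$ the coefficient of $\psi_k$ depends only on the single entry $\underline{d}^i_k$, being $\tfrac12|d_k+1|$ at a pole and $\tfrac12|d_k|$ at a zero.

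The key observation is that passing from $\underline{d}$ to $\underline{d}^i$ changes exactly the $i$-th pole entry, hence changes exactly one $\psi$-coefficient. First I would single out the two zero-entry indices, say the marked points carrying $a_1$ and $a_2$ (these exist since $m\geq 2$): their $\psi$-coefficients are $\tfrac12 a_1$ and $\tfrac12 a_2$ and are \emph{unchanged} across all of $Z^n_{\underline{d}^1},\dots,Z^n_{\underline{d}^s}$, because modifying a pole entry leaves every zero-entry coefficient fixed. So if $Z^n_{\underline{d}^i}=c\,Z^n_{\underline{d}^j}$ then the scalar $c$ is forced by $\tfrac12 a_1=c\cdot\tfrac12 a_1$, giving $c=1$ whenever some $a_\ell\neq 0$; since all $a_\ell\geq 1$ this pins $c=1$. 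It then suffices to show the two classes are not equal. For that I compare the $\psi$-coefficient at the pole index $i$: in $Z^n_{\underline{d}^i}$ the $i$-th entry has been raised to $-p_i+1$ (or set to $0$ when $p_i=2$), whereas in $Z^n_{\underline{d}^j}$ the $i$-th entry is still $-p_i$; these give $\psi_i$-coefficients $\tfrac12|{-p_i+2}|$ versus $\tfrac12|{-p_i+1}|=\tfrac12(p_i-1)$, which differ (for $p_i\geq 2$, $|p_i-2|\neq p_i-1$). Hence $Z^n_{\underline{d}^i}\neq Z^n_{\underline{d}^j}$, contradicting $c=1$.

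A small case-analysis is needed to handle $p_i=2$ versus $p_i\geq 3$, and to make the $\psi$-coefficient comparison unambiguous I would pick the comparison index $i$ to be one at which $\underline{d}^i$ and $\underline{d}^j$ genuinely disagree (at least two of the $\underline{d}^i$ differ because $s\geq 2$). The only genuine subtlety, and the step I expect to require the most care, is ensuring that the $\delta$-coefficients do not secretly restore proportionality when the relevant $\psi$-coefficients happen to coincide numerically---for instance if $p_i=2$ makes $\tfrac12|p_i-2|=0$ collide with some other entry. To close this gap I would fall back on the subset coefficients $f_i(S)$: taking $S=\{i\}\cup(\text{all zero indices})$ or a two-element set isolating the altered pole, the value $\tfrac12|\sum_{k\in S}\underline{d}^i_k+|S^-||$ distinguishes $\underline{d}^i$ from $\underline{d}^j$ because the partial sums $\sum_{k\in S}\underline{d}^i_k$ differ by exactly $1$ in the altered coordinate, and I would verify that for a suitable choice of $S$ this forces a strict inequality between $f_i(S)$ and $f_j(S)$ even after accounting for the forced scalar $c=1$. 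Assembling these comparisons gives two non-proportional classes and completes the proof.
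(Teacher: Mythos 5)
Your proposal rests on a step that is not valid in genus $0$: you treat the coefficients of $\psi_k$ and $\delta_{0:S}$ in the expression of Theorem~\ref{thm:ZRdivI} as if they were invariants of the divisor class, asserting that ``two such classes are proportional if and only if the coefficient vectors agree up to a common scalar on every generator.'' In $\Pic_\QQ(\overline{\mathcal{M}}_{0,n})$ the classes $\psi_i$ and $\delta_{0:S}$ are very far from independent: Keel's relation $\psi_i+\psi_j=\sum_{i\in S,\,j\notin S}\delta_{0:S}$ lets you shift any pair of $\psi$-coefficients at will, and the paper's own Lemma~\ref{trivial} exhibits, for \emph{every} edge-labelled complete graph $\Gamma$, a trivial class $D(\Gamma)=\sum_i e(i)\psi_i-\sum_{1\in S}e(S)\delta_{0:S}=0$ with nonzero coefficients. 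Indeed the proof of Theorem~\ref{thm:ZRdivI} itself writes the same divisor $Z^{n}_{\underline{d},1^{m-2}}$ with two different coefficient vectors (binomial coefficients versus $f(S)=\tfrac12|d_S+|S^-||$) precisely by adding such trivial classes. Consequently your deduction that the zero-entry $\psi$-coefficients force $c=1$, and your subsequent mismatch of the $\psi_i$-coefficient ($\tfrac12|p_i-2|$ versus $\tfrac12(p_i-1)$), establish only that two particular \emph{presentations} differ, not that the classes are non-proportional. Your fallback via the subset coefficients $f_i(S)$ has the same defect, since those are again coefficients in a non-canonical presentation.

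The paper avoids this by testing against honest invariants. It first pulls back along boundary gluing maps $\Theta:\overline{\mathcal{M}}_{0,n-1}\to\overline{\mathcal{M}}_{0,n}$ to show $\Theta^*[D^n_{\underline{d}^i}]=\Theta^*[D^n_{\underline{d}^j}]\neq 0$, so that any proportionality would force actual equality; it then intersects the difference with explicit test curves. This is also where the real difficulty lives, which your sketch does not engage: the first test curves $B_i$ satisfy $2B_i\cdot\bigl([D^n_{\underline{d}^i}]-[D^n_{\underline{d}^j}]\bigr)=2-m+\sum_k\bigl(|a_k-p_i|-|a_k-p_i+2|\bigr)$, which \emph{can vanish} (exactly when $p_i\geq a_k+2$ for all but one $k$), so the obvious one-coordinate comparison genuinely fails in borderline configurations, and the paper needs two further families of test curves $\underline{B}_{\{i,s+1\}}$ and $\underline{B}_{\{i,k\}}$, with a case analysis eliminating $m=2$ and the remaining configurations, to reach a contradiction. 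To repair your argument you would need to replace coefficient comparison with intersection numbers against curve classes (or reduce to a genuine basis of $\Pic_\QQ(\overline{\mathcal{M}}_{0,n})$), at which point you are led back to essentially the paper's computation.
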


\begin{proof}
Let 
$$\Theta:\overline{\mathcal{M}}_{0,n-1}\cong\overline{\mathcal{M}}_{0,\{r\}\cup\left\{\{1,\dots,n\}\setminus\{i,j\}\right\}}\longrightarrow\overline{\mathcal{M}}_{0,n}$$
be the boundary map that associates to any $\{r\}\cup\left\{\{1,\dots,n\}\setminus\{i,j\}\right\}$-pointed stable curve, the $n$-pointed stable curve obtained by gluing an $\{i,j,s\}$-pointed rational tail by identifying the point $r$ with $s$. Geometrically or by the pullback formula in section~\ref{maps} and the divisor class given in Theorem~\ref{thm:ZRdivI} we observe
$$ \Theta^*[D^n_{\underline{d}^i}]= \Theta^*[D^n_{\underline{d}^j}]= [D^{n-1}_{\underline{d}^{i,j}}]\ne0$$
for $1\leq i\leq j\leq s$ where
$$ \underline{d}^{i,j}=(-p_i-p_j+2,-p_1,\dots,\hat{-p_i},\dots,\hat{-p_j}\dots,-p_s,a_1,\dots,a_m).$$
Hence if two such divisors are proportional, they are equal.

Next consider the test curve $B_i$ in $\overline{\mathcal{M}}_{0,n}$ constructed by fixing the first $n-1$ points in general position on a rational curve and allowing the final point to move freely on the curve. We have
$$ B_i\cdot \psi_j=\begin{cases}n-3&\text{for $i=j$}\\1&\text{for $i\ne j$}  \end{cases} \hspace{0,7cm}B_i\cdot \delta_{i,j}=1$$ 
with all other intersections equal to zero. Then by the formula given for the class of $[D^n_{\underline{d}^j}]$ we obtain
$$2 B_i\cdot[D^n_{\underline{d}^j}]=\begin{cases}0&\text{for $p_j=2$ and $i=j$}\\
  (m-2)(p_i-1)+\sum_{i=1}^ma_i-\sum_{k=1}^m|a_k-p_i+1|     &\text{for $p_j\geq 2$  and $i\ne j$}\\
 (m-2)(p_i-2)+\sum_{i=1}^ma_i-\sum_{k=1}^m|a_k-p_i+2|   &\text{for $p_j\geq3$ and $i=j$.}   \end{cases}$$
Hence 
$$ 2B_i\cdot\left([D^n_{\underline{d}^i}]-[D^n_{\underline{d}^j}]\right)=2-m+\sum_{k=1}^m\left(|a_k-p_i|-|a_k-p_i+2|\right) $$
for $1\leq i<j\leq s$. This intersection is equal to zero if and only if $p_i\geq a_k+2$ for all but one $k$. Further this must hold for all choices of $i,j$. Without loss of generality we let 
$a_1+1\geq p_i$ for all $i$ and $a_k\leq p_i-2$ for all $i$ and $2\leq k\leq m$.

Finally, consider the curve $\underline{B}_{\{p,q\}}$ obtained by fixing $n-2$ general points on a rational curve $X$ labelled as $\{1,\dots,n\}\setminus\{i,s+1\}$ and gluing an $\{i,s+1,r\}$-pointed rational curve by identifying $r$ with a point that moves freely in $X$. Observe
$$\underline{B}_{\{p,q\}}\cdot \psi_j=\begin{cases}1&\text{for $j\ne p,q$}\\
0&\text{for $j=i,s+1$,}  \end{cases}  
\hspace{0.75cm} \underline{B}_i\cdot\delta_{0:\{i,s+1\}}=n-4,
\hspace{0.75cm} \underline{B}_i\cdot\delta_{0:\{i,j,s+1\}}=1\hspace{0.2cm}\text{for $j\ne i,s+1$.}  $$
Hence for $i=1,\dots,s$
$$2 \underline{B}_{\{i,s+1\}}\cdot[D^n_{\underline{d}^i}]=(n-4)(p_i-a_1-2)+\sum_{k\ne i}(p_k-1)
-\sum_{k\ne i}|a_1-p_i-p_k+3|-\sum_{k=2}^m (a_1-p_i+2)$$
and for $i,j=1,\dots,s$ and $i\ne j$
$$2 \underline{B}_{\{i,s+1\}}\cdot[D^n_{\underline{d}^j}]=
(n-4)(p_i-a_1-1)+\sum_{k\ne i}(p_k-1)-1
-|a_1-p_j-p_i+3|-\sum_{k\ne i,j}|a_1-p_i-p_k+2|-\sum_{k=2}^m (a_1-p_i+1).  $$
Hence
$$2 \underline{B}_{\{i,s+1\}}\cdot\left([D^n_{\underline{d}^i}]-[D^n_{\underline{d}^j}]\right)=6-2m-s+\sum_{k\ne i,j}\left(|a_1-p_i-p_k+2|-|a_1-p_i-p_k+3  | \right)$$
But this expression has a maximum of $4-2m$ which equals zero if and only if $m=2$. Hence $s\geq 3$ and $p_i+p_j\geq a_1+3$ for all $i,j$.

Finally, for $i,k=1,\dots,s$ and $i\ne k$ 
$$2 \underline{B}_{\{i,k\}}\cdot[D^n_{\underline{d}^i}]=(4-n-s)(p_i+p_k-3)+2(a_1+a_2)$$
and for $j\ne i,k$
$$2 \underline{B}_{\{i,k\}}\cdot[D^n_{\underline{d}^j}]=(4-n-s)(p_i+p_k-2)+2(a_1+a_2).$$
But then
$$2 \underline{B}_{\{i,k\}}\cdot\left([D^n_{\underline{d}^i}]-[D^n_{\underline{d}^j}]\right)=n+s-4>0$$
providing the contradiction.
\end{proof}

\begin{lem}\label{nonproport}
Let
$$\kappa=(-p_1,\dots,-p_s,a_1,\dots,a_m,1^{3g+s-1})$$
for $\sum_{i=1}^sp_i-\sum_{i=1}^ma_i=g+s-1$ and $n=s+m$, $s\geq 2$, $p_i\geq 2$ and $a_i\geq 1$ and 
$$\kappa^i=\begin{cases} (-p_1,\dots,-p_i+1,\dots,-p_s,a_1,\dots,a_m,1^{3g+s-2})& \text{for $p_i\geq 3$}\\
(-p_1,\dots,-p_{i-1},0,-p_{i+1},\dots,-p_s,a_1,\dots,a_m,1^{3g+s-3})  &\text{for $p_i=2$}\end{cases}$$
for $i=1,\dots s$. Let 
$$\phi:\overline{\X}(\kappa_i)\longrightarrow \overline{\mathcal{M}}_{g,s+m}$$
be the morphism forgetting all but the first $s+m$ points. Then 
$\phi_*\overline{\X}(\kappa_i)$ is a divisor for all $i$ and the classes $[\phi_*\overline{\X}(\kappa_i)]$ are not all proportional for $i=1,\dots,s$.
\end{lem}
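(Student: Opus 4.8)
I would treat the two assertions separately. That each $\phi_*\overline{\X}(\kappa^i)$ is a divisor is a dimension count: as in the proof of Lemma~\ref{onecomp}, on a fixed general curve an exact differential is determined up to scale by its poles and non-simple zeros (two such would span a pencil, contradicting the dimension), so $\phi$ is generically finite on $\overline{\X}(\kappa^i)$. Lowering a single pole order by one imposes one condition by Riemann--Roch, whence $\dim\X(\kappa^i)=\dim\X(\kappa)-1=3g-4+s+m$ by Lemma~\ref{dom}, and the image $\phi_*\overline{\X}(\kappa^i)$ is a divisor in $\overline{\mathcal{M}}_{g,s+m}$.

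For non-proportionality the plan is to reduce to the genus zero computation of Lemma~\ref{nonpropgenuszero}. Fix a general $[Y,y]\in\mathcal{M}_{g,1}$ and consider the slice
\begin{equation*}
\iota:\overline{\mathcal{M}}_{0,s+m+1}\hookrightarrow\overline{\mathcal{M}}_{g,s+m},\qquad [P]\mapsto[P\cup_y Y],
\end{equation*}
whose image lies in $\delta_{0:\{1,\dots,s+m\}}$, so that all poles and non-simple zeros sit on a rational tail $P$ glued to $Y$ at the node. For general $[Y,y]$ there is, up to scale, a unique exact differential with a single pole at $y$, necessarily of order $g+2$; matching orders at the node forces $\omega|_P$ to vanish to order $g$ there, and a degree count on $P$ leaves exactly $s-2$ further simple zeros. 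By Theorem~\ref{ZRGRC} and Theorem~\ref{CT} the restriction of $D_i:=[\phi_*\overline{\X}(\kappa^i)]$ is the genus zero zero--residue divisor
\begin{equation*}
\iota^*D_i=\left[Z^{s+m+1}_{\widetilde{\kappa}^i}\right],\qquad \widetilde{\kappa}^i=\left(-p_1,\dots,-(p_i-1),\dots,-p_s,a_1,\dots,a_m,\,g,\,1^{s-2}\right),
\end{equation*}
the entry $g$ recording the zero of $\omega|_P$ at the node. The $\widetilde{\kappa}^i$ differ only in the $i$th pole, exactly as in Lemma~\ref{nonpropgenuszero}, and the extra zeros are common to all of them; so the test--curve computation there applies and shows that the classes $\iota^*D_i$ are not all proportional. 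As restriction is linear, proportionality of the $D_i$ would force proportionality of the $\iota^*D_i$, and this contradiction proves the lemma.

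The crux, which I expect to be the hardest step, is the identification $\iota^*D_i=[Z^{s+m+1}_{\widetilde{\kappa}^i}]$. One must check that for a fixed general $Y$ the only degenerations of $\overline{\X}(\kappa^i)$ meeting this slice are those in which $\omega|_Y$ is the minimal exact differential above --- so that no larger pole at the node contributes and the restricted locus has the expected codimension --- and that the surplus simple zeros break off onto $Y$, leaving exactly $s-2$ free points on $P$. This amounts to running the global residue condition of Theorem~\ref{ZRGRC} on the relevant two--level graph, after which it remains to confirm that the fixed order $g$ zero does not disturb the numerical inequalities underlying Lemma~\ref{nonpropgenuszero}.
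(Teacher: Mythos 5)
Your overall architecture runs parallel to the paper's: both proofs establish non-proportionality by restricting the classes $[\phi_*\overline{\X}(\kappa^i)]$ to a boundary slice and feeding the result into the genus-zero computation of Lemma~\ref{nonpropgenuszero}. The paper does this one genus at a time, pulling back under the map gluing a fixed general elliptic tail at the $(s+m+1)$th point and using $\pi^*[\phi_*\overline{\X}(\kappa^i)]=[\phi'_*\overline{\X}(\kappa^i_{g-1})]$, with divisoriality coming from the codimension-two locus $W=\overline{\X}(\kappa^i_{g-1})\cup\overline{\X}(-3,1^3)$ produced by admissible covers; you do it in one step with a genus-$g$ tail, and your arithmetic (pole of order $g+2$ at the node, $3g$ simple zeros on $Y$, $s-2$ on $P$, the augmented signature $(\underline{d}^i,g)$ still an instance of Lemma~\ref{nonpropgenuszero}) is consistent. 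The problem is the tool you name for the crux.

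Theorems~\ref{ZRGRC} and~\ref{CT} characterize degenerations of differentials \emph{of the second kind}, but $D_i$ is cut out by the \emph{exact} locus, and $\overline{\X}(\kappa^i)\subsetneq\overline{\ZZZ}(\kappa^i)$ as soon as $g>0$; running the global residue condition on the two-level graph, as you propose, cannot single out your component. Concretely, let $\eta_Y$ be any differential on $Y$ with a double pole at the node: it automatically has zero residue (it is the only pole), so it is of the second kind, but it is never exact since $h^0(\mathcal{O}_Y(y))=1$. Pairing it with a genus-zero zero-residue differential on $P$ of type $(\underline{d}^i,1^{g+s-2})$ in which the node is an unconstrained point of order zero gives a twisted differential satisfying the global residue condition, so Theorems~\ref{ZRGRC}/\ref{CT} place these slice points in the second-kind closure; they sweep out a second divisorial component of the slice, namely the pullback of $Z^{s+m}_{\underline{d}^i}$ under the map forgetting the node point. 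If such configurations were in $\iota^{-1}(D_i)$ your identity $\iota^*D_i=[Z^{s+m+1}_{\widetilde{\kappa}^i}]$ would be false, and nothing in the cited theorems excludes them. What excludes them is exactness: the limit on $Y$ of differentials all of whose absolute periods vanish again has vanishing periods, hence must be proportional to $df$ with pole of order exactly $g+2$ --- and this is precisely the admissible-covers statement the paper invokes at the corresponding step of its own proof, which your plan never supplies. A secondary gap: your generic-finiteness argument for divisoriality quantifies over a general curve in $\mathcal{M}_{g,s+m}$, but the image of $\X(\kappa^i)$ is not dense, so the absence of pencils on a general curve says nothing about the fibres over the special locus that is the image; the paper instead pins the codimension to one by exhibiting the codimension-two boundary locus $\phi(W)$ inside the image closure.
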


\begin{proof}
We proceed by induction. The result holds for genus $g=0$ by Lemma~\ref{nonpropgenuszero}. Assume the result holds in genus $g-1$.
For ease of notation we let
$$\kappa^i_{g-1}=\begin{cases} (-p_1,\dots,-p_i+1,\dots,-p_s,a_1,\dots,a_m,1^{3(g-1)+s-1})& \text{for $p_i\geq 3$}\\
(-p_1,\dots,-p_{i-1},0,-p_{i+1},\dots,-p_s,a_1,\dots,a_m,1^{3(g-1)+s-2})  &\text{for $p_i=2$}\end{cases}$$
for $i=1,\dots s$. Consider the locus
$$W:=\overline{\X}(\kappa^i_{g-1})\cup \overline{\X}(-3,1^3)$$
obtained by gluing the $(m+s+1)$th point in the first stratum to the first point in the second stratum. The theory of admissible covers shows this loci is contained $\overline{\X}(\kappa^i)$. However ${\X}(-3,1^3)$ is simply the locus of elliptic curves with the four two torsion points marked. Hence 
$$\overline{\X}(-3,1^3)\longrightarrow \overline{\mathcal{M}}_{1,1}$$
is dominant and hence by the induction hypothesis, the image of $W$ has codimension two and is completely contained in the boundary. This implies  $\phi_*\overline{\X}(\kappa_i)$ is a divisor for all $i$. 

Consider the gluing morphism
$$\pi:\overline{\mathcal{M}}_{g-1,s+m+1}\longrightarrow \overline{\mathcal{M}}_{g,s+m}$$
that glues a fixed general one-pointed elliptic curve to the $(s+m+1)$th point. We observe that 
$$\pi^*[\phi_*\overline{\X}(\kappa^i)]=[\phi'_*\overline{\X}(\kappa_{g-1}^i)]$$
where
$$\phi':\overline{\X}(\kappa_{g-1}^i)\longrightarrow \overline{\mathcal{M}}_{g-1,s+m+1}$$
is the morphism forgetting all but the first $s+m+1$ points. Hence if the result holds in genus $g-1$, it holds in genus $g$.
\end{proof}

\Hurwitz*

\begin{proof}
Without loss of generality let 
$$\kappa=(-p_1,\dots,-p_s,a_1,\dots,a_m,1^{3g+(d-1)})$$
be a signature associated to $\underline{\eta}$. (This signature is not unique due to the ordering and also if all but one $p_i=2$, the choice of special fibre.) Then 
$$\X(\kappa)/S_{3g+(s-1)}\longrightarrow \Hur(\underline{\eta})$$
where $S_{3g+(s-1)}$ permutes the markings on the final $3g+(s-1)$ points is a morphism and showing $\X(\kappa)/S_{3g+(s-1)}$ is irreducible implies the result.

We proceed by induction on $d$, the degree of the covers in $\Hur(\underline{\eta})$. The base case is when $d=g+1$, where all ramification is necessarily simple and the result holds by Clebsch~\cite{Clebsch}.

Consider
$$\phi:\overline{\X(\kappa)}/S_{3g+(s-1)}\longrightarrow \overline{\mathcal{M}}_{g,d+m}$$
that forgets all but the first $d+m$ marked points.  Lemma~\ref{dom} implies $\phi$ is dominant and Lemma~\ref{onecomp} implies that there is exactly one component that is dominant under $\phi$. Hence if there is another component it must have positive dimensional fibres under $\phi$.

Fix a point $[C,p_1,\dots,p_{d+m}]\in \mathcal{M}_{g,d+m}$ that has a positive dimensional fibre. Hence one of the forgotten simple zeros moves freely in $C$ in the fibre and in the closure $\overline{\X}(\kappa)$ collides with each $p_i$.

Define 
$$\kappa^i=\begin{cases} (-p_1,\dots,-p_i+1,\dots,-p_s,a_1,\dots,a_m,1^{3g+d-2})& \text{for $p_i\geq 3$}\\
(-p_1,\dots,-p_{i-1},0,-p_{i+1},\dots,-p_s,a_1,\dots,a_m,1^{3g+d-3})  &\text{for $p_i=2$}\end{cases}$$
for $i=1,\dots s$. By the degeneration of differentials of the second kind we observe that 
$$[C,p_1,\dots,p_{d+m}]\in\phi'_*\overline{\X}(\kappa^i)$$
for $i=1,\dots s$, where
$$\phi':\overline{\X}(\kappa^i)\longrightarrow \overline{\mathcal{M}}_{g,d+m}$$
forgets the last $3g+(s-2)$ marked points if $p_i\geq3$ or $3g-(s-3)$ points if $p_i=2$ for $i=1,\dots,s$.

By the inductive hypothesis $\phi'_*\overline{\X}(\kappa^i)$ are irreducible and by Lemma~\ref{nonproport} the classes $[\phi'_*\overline{\X}(\kappa^i)]$ are not all proportional and hence the supports are not all equal and the intersection is of codimension at least two. 

Hence the fibre of $\pi$ above $[C,p_1,\dots,p_{d+m}]$ must have dimension at least two contradicting the inductive hypothesis that $\overline{\X}(\kappa^i)$ are irreducible.

\end{proof}

\exgenusone*

In the case that the source curve has genus $g=0$ this result can be sharpened using the results of Liu and Osserman~\cite{LO}.
\Hurwitzgenuszero*

\begin{proof}
Without loss of generality let 
$$\kappa=(-p_1,\dots,-p_s,a_1,\dots,a_m,1^{d-3})$$
be a signature associated to $\underline{\eta}$. (This signature is not unique due to the ordering and also if all but one $p_i=2$, the choice of special fibre.) Then again,
$$\X(\kappa)/S_{s-1}\longrightarrow \Hur(\underline{\eta})$$
where $S_{s-1}$ permutes the markings on the final $s-1$ points is a morphism and showing $\X(\kappa)/S_{s-1}$ is irreducible implies the result.

We proceed by induction on $d$. The base case is when $d=3$, where all ramification is necessarily pure and the result holds by Liu and Osserman~\cite{LO}. Further, Liu and Osserman's result covers the pure ramification cases which in our notation are the cases where $s=d-1$ so we restrict to the case $s\leq d-2$.

Consider
$$\phi:\overline{\X(\kappa)}/S_{s-1}\longrightarrow \overline{\mathcal{M}}_{g,d+m}$$
that forgets all but the first $d+m$ marked points.  Lemma~\ref{dom} implies $\phi$ is dominant and Lemma~\ref{onecomp} implies that there is exactly one component that is dominant under $\phi$. Hence if there is another component it must have positive dimensional fibres under $\phi$.

Fix a point $[C,p_1,\dots,p_{d+m}]\in \mathcal{M}_{0,d+m}$ that has a positive dimensional fibre. Hence one of the forgotten simple zeros moves freely in $C$ in the fibre and in the closure $\overline{\X}(\kappa)$ collides with each $p_i$.

Define 
$$\kappa^i=\begin{cases} (-p_1,\dots,-p_i+1,\dots,-p_s,a_1,\dots,a_m,1^{d-4})& \text{for $p_i\geq 3$}\\
(-p_1,\dots,-p_{i-1},0,-p_{i+1},\dots,-p_s,a_1,\dots,a_m,1^{d-5})  &\text{for $p_i=2$}\end{cases}$$
for $i=1,\dots s$. By the degeneration of differentials of the second kind we observe that 
$$[C,p_1,\dots,p_{d+m}]\in\phi'_*\overline{\X}(\kappa^i)$$
for $i=1,\dots s$, where
$$\phi':\overline{\X}(\kappa^i)\longrightarrow \overline{\mathcal{M}}_{g,d+m}$$
forgets the last $s-2$ marked points if $p_i\geq3$ or $s-3$ points if $p_i=2$ for $i=1,\dots,s$.

By the inductive hypothesis $\phi'_*\overline{\X}(\kappa^i)$ are irreducible for $p_i\geq 3$. Though in this case we have introduced a subtlety in that the inductive hypothesis does not necessarily cover $\phi'_*\overline{\X}(\kappa^i)$  for $p_i=2$. However, Lemma~\ref{nonproport} shows the classes $[\phi'_*\overline{\X}(\kappa^i)]$ are not all proportional and hence the supports are not all equal and if the intersection has codimension one, we must have that the support of $\phi'_*\overline{\X}(\kappa^i)$ for each $p_i\geq 3$ forms the same irreducible connected component of the support of $\phi'_*\overline{\X}(\kappa^i)$ for any $p_i=2$. 

Next consider the test curve $B_i$ in $\overline{\mathcal{M}}_{0,n}$ introduced in the proof of Lemma~\ref{nonpropgenuszero} constructed by fixing the first $n-1$ points in general position on a rational curve and allowing the final point to move freely on the curve. We have
$$ B_i\cdot \psi_j=\begin{cases}n-3&\text{for $i=j$}\\1&\text{for $i\ne j$}  \end{cases} \hspace{0,7cm}B_i\cdot \delta_{i,j}=1$$ 
with all other intersections equal to zero. Irreducible curves with class equal to $[B_i]$ cover an open dense of $\overline{\mathcal{M}}_{0,n}$ and hence this curve is a moving curve, that is, $[B_i]\cdot [D]\geq 0$ for all effective divisors $D$. Observe that if $p_i=2$ then
$$B_i\cdot[D^n_{\kappa^j}]=\begin{cases}0&\text{for $i=j$}\\
  m-1>0  &\text{for $i\ne j$ and $p_j\geq3$.}   \end{cases}$$
Hence $\phi'_*\overline{\X}(\kappa^j)$ for each $p_j\geq 3$ cannot form a component of $\phi'_*\overline{\X}(\kappa^i)$ for each $p_i=2$ and the intersection is of codimension at least two. 

Hence the fibre of $\pi$ above $[C,p_1,\dots,p_{d+m}]$ must have dimension at least two contradicting the inductive hypothesis that $\overline{\X}(\kappa^i)$ for $p_i\geq 3$ are irreducible.
\end{proof}

\exgenuszero*


\bibliographystyle{plain}
\bibliography{base}
\end{document}